\let\oldvec\vec
\let\vec\oldvec
\newenvironment{proofof}[1]{\par
  \pushQED{\qed}%
  \normalfont \topsep6\p@\@plus6\p@\relax
  \trivlist
  \item[\hskip\labelsep
        \itshape
    Proof of #1\@addpunct{.}]\ignorespaces
}{%
  \popQED\endtrivlist\@endpefalse
}
\begin{document}

\title{General Multi-sum Transformations and Some Implications
}


\author{James Mc Laughlin
}


\institute{James Mc Laughlin \at
              Mathematics Department,
 25 University Avenue,
West Chester University, West Chester, PA 19383 \\
              Tel.: + 	610-430-4417\\
              Fax: + 610-738-0578\\
              \email{jmclaughl@wcupa.edu}           
}

\date{Received: \today / Accepted: date}

\maketitle

\begin{abstract}
We give two general transformations that allows certain quite general basic hypergeometric multi-sums of arbitrary depth (sums that involve an arbitrary sequence $\{g(k)\}$), to be reduced to an infinite $q$-product times a single basic hypergeometric sum.  Various applications are given, including summation formulae for some $q$ orthogonal polynomials, and various multi-sums that are expressible as infinite products.

\keywords{Bailey pairs
\and WP-Bailey Chains \and WP-Bailey pairs  \and
 Basic Hypergeometric Series \and q-series \and theta series \and $q$-products \and orthogonal polynomials }
 \subclass{MSC 33D15 \and MSC 11B65 \and MSC 05A19 }
\end{abstract}

\section{Introduction}\label{intro}
The main results of the paper are two  general multi-sum-to-single-sum transformations, in which (assuming convergence on each side) an arbitrary sequence $\{g_k\}$ may be input on each side of the transformations (see Theorems \ref{multsumt} and \ref{multsumtt} below).

Such transformations are of course not new, and indeed the iteration of any Bailey- or WP-Bailey chain (see, for example, \cite{A01,W03,LM09,MZ10}) will produce such a transformation, if the ``$\beta_n$" on the multi-sum side are replaced with their defining sums over the ``$\alpha_j$", so that both sides become sums over a single sequence $\{\alpha_j\}$.  When the $\{\alpha_j\}$ is suitably chosen, the single series side may be summed as an infinite product.

Andrews in \cite{A84}, for example, showed how each of Slater's 130  identities in \cite{S52} may be embedded in an infinite family of multi-sum identities. One example of such a family of identities are the (analytic version of) the Andrews-Gordon identities (the case $k=2$ gives the Rogers-Ramanujan identities).
\begin{theorem}
For integers $k\geq 2$ and $1\leq i \leq k$,
\begin{multline}\label{AGids}
\sum_{m_1\geq m_2\geq \dots \geq m_{k-1}\geq 0}\frac{q^{m_1^2+m_2^2+\dots+m_{k-1}^2+m_i+m_{i+1}+\dots+m_{k-1}}}
{(q;q)_{m_1-m_{2}}  \dots (q;q)_{m_{k-2}-m_{k-1}}(q;q)_{m_{k-1}} }\\
=\frac{(q^i,q^{2k+1-i},q^{2k+1};q^{2k+1})_{\infty}}{(q;q)_{\infty}}.
\end{multline}
\end{theorem}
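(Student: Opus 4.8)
The plan is to derive \eqref{AGids} by iterating a Bailey chain --- the $\rho_1,\rho_2\to\infty$ specialisation of Bailey's lemma --- which is exactly the mechanism recalled above: on the multi-sum side one expands each ``$\beta_n$'' into its defining sum over the ``$\alpha_j$'', and then sums the single-series side as an infinite product. Recall that $(\alpha_n,\beta_n)$ is a Bailey pair relative to $a$ if $\beta_n=\sum_{r=0}^{n}\frac{\alpha_r}{(q;q)_{n-r}(aq;q)_{n+r}}$, and that the chain map $(\alpha_n,\beta_n)\mapsto(\alpha_n',\beta_n')$ with $\alpha_n'=a^{n}q^{n^{2}}\alpha_n$ and $\beta_n'=\sum_{r=0}^{n}\frac{a^{r}q^{r^{2}}}{(q;q)_{n-r}}\beta_r$ sends a Bailey pair relative to $a$ to another Bailey pair relative to $a$. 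I would take as seed the unit Bailey pair relative to $a$: $\beta_n=\delta_{n,0}$, together with $\alpha_0=1$ and $\alpha_n=(-1)^{n}q^{\binom n2}(1-aq^{2n})(a;q)_n\big/\big((1-a)(q;q)_n\big)$ for $n\ge 1$ (so at $a=1$ one gets $\alpha_n=(-1)^{n}q^{\binom n2}(1+q^{n})$ for $n\ge 1$).

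First I would run the chain $k-1$ times. The first application turns $\beta_n$ into $1/(q;q)_n$, and each further application introduces one more summation variable, so after the $k-1$ applications the $\beta$-side is a $(k-1)$-fold nested sum; relabelling the innermost index $m_{k-1}$ and the outermost $m_1$, its summand is $q^{m_1^{2}+\cdots+m_{k-1}^{2}}$ (times $a^{m_1+\cdots+m_{k-1}}$) divided by $(q;q)_{m_1-m_2}\cdots(q;q)_{m_{k-2}-m_{k-1}}(q;q)_{m_{k-1}}$, exactly the denominator in \eqref{AGids}, while the $\alpha$-side is $a^{(k-1)n}q^{(k-1)n^{2}}$ times the seed. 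Taking $a=1$ produces the $i=k$ case directly. For the partial linear exponent $m_i+\cdots+m_{k-1}$ with $1\le i<k$ a single power of $q$ for $a$ will not do; here I would invoke the Bailey lattice of Agarwal--Andrews--Bressoud, equivalently: run the innermost $k-i$ iterations with the ``$a=q$'' kernel $\beta_n'=\sum_r \frac{q^{r^{2}+r}}{(q;q)_{n-r}}\beta_r$, then pass from Bailey pairs relative to $q$ to Bailey pairs relative to $1$ by the standard change of base, and run the remaining $i-1$ iterations with the ``$a=1$'' kernel. This is the device built to deliver the modulus $2k+1$ identities for every residue $i$.

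Next I would let $N\to\infty$ in the defining relation of the final Bailey pair. Since $(q;q)_{N-r}$ and $(aq;q)_{N+r}$ both tend to $(q;q)_\infty$ (up to the factor $1-q$ arising in the $a=q$ phase, which cancels the $1/(1-a)$ in the seed $\alpha$), the relation collapses to
\[
\text{(the multi-sum in the theorem)}=\frac{1}{(q;q)_\infty}\sum_{r\ge 0}\alpha^{\mathrm{fin}}_r .
\]
The factor $1+q^{r}$ in $\alpha^{\mathrm{fin}}_r$ splits $\sum_{r\ge 0}$ into two half-line sums which, after the substitution $r\mapsto -r$ in one of them, combine into the bilateral series $\sum_{r\in\mathbb Z}(-1)^{r}q^{(2k+1)\binom r2+ir}$; the one point to check is that the exponent does simplify to $(2k+1)\binom r2+ir$, and it is exactly here that the choices made above must be tuned so that the coefficient of $r$ equals $i$. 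Then Jacobi's triple product, $\sum_{r\in\mathbb Z}(-1)^{r}z^{r}q^{\binom r2}=(z;q)_\infty(q/z;q)_\infty(q;q)_\infty$ with base $q^{2k+1}$ and $z=q^{i}$, turns this into $(q^{i},q^{2k+1-i},q^{2k+1};q^{2k+1})_\infty$, the right side of \eqref{AGids}. Every series converges absolutely for $|q|<1$, so the limit and the reindexing are valid.

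The main obstacle is the middle step: arranging the Bailey lattice path (equivalently the mixture of ``$a=1$'' and ``$a=q$'' chain steps, or the appropriate non-unit seed) so that precisely the variables $m_i,\dots,m_{k-1}$ pick up a linear exponent, and then checking that the resulting bilateral theta series is the one whose triple-product form is the claimed modulus-$(2k+1)$ product. Once the lattice path is fixed, the $N\to\infty$ limit and the triple-product evaluation are routine.
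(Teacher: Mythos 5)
First, a point of comparison: the paper does not actually prove this theorem. Identity \eqref{AGids} is quoted from Andrews \cite{A74} (in Chapman's \cite{C05} formulation) purely as motivating background; the closest the paper's own machinery comes is \eqref{mseq103}, which has the same product side (for $p=2k+1$) but a genuinely different multi-sum on the left, obtained from Theorem~\ref{multsumtt} and hence ultimately from the $q$-Gauss sum rather than from Bailey's lemma. Your Bailey-chain/Bailey-lattice route is therefore necessarily different from anything in the paper; it is also different from Andrews' original proof (which works with his functions $J_{k,i}(a;x;q)$ and $q$-difference equations) and from Chapman's probabilistic argument. It is, however, the standard modern proof, and the outline --- unit Bailey pair, iterated $\rho_1,\rho_2\to\infty$ kernel, lattice adjustment for $i<k$, $N\to\infty$, Jacobi triple product --- is sound.

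Two things need attention before this is a complete proof. (1) The iteration count is off by one: with the unit seed relative to $a=1$, $j$ applications of the kernel give $\alpha^{(j)}_n=q^{jn^2}\alpha^{(0)}_n$, and the resulting bilateral series is $\sum_{r\in\mathbb{Z}}(-1)^rq^{jr^2+\binom{r}{2}}=\sum_{r\in\mathbb{Z}}(-1)^rq^{(2j+1)\binom{r}{2}+jr}$, i.e.\ modulus $2j+1$; moreover the first application is absorbed by $\delta_{n,0}$ and contributes no square to the $\beta$-side, so $k-1$ applications leave only $k-2$ free summation variables after the limit. You need $k$ applications of the squared kernel in total (equivalently, $k-1$ chain steps followed by the limiting form of Bailey's lemma, which supplies both the $k$-th factor of $q^{n^2}$ and the outermost variable $m_1$); as literally written, your computation lands on modulus $2k-1$ with one too few summation variables. (2) The case $1\le i<k$, which you correctly identify as the crux, is delegated to the Bailey lattice of Agarwal--Andrews--Bressoud without carrying out the verification that the mixed $a=q$ and $a=1$ path produces exactly the linear exponent $m_i+\cdots+m_{k-1}$ and the theta series $\sum_{r}(-1)^rq^{(2k+1)\binom{r}{2}+ir}$. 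Citing that result is legitimate, but the ``tuning'' you flag is precisely the content of the proof for $i<k$ and is currently asserted rather than done. Neither issue is conceptual: with the count corrected and the lattice step either written out or properly referenced, the argument closes.
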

This result was first proved by Andrews \cite{A74}, but our statement of it is based on Chapman's \cite{C05} version, since the notation he uses is closer to that used in the present paper. Before coming to the identities in the present paper, we briefly consider some other multi-sum transformations in the literature.

Multi-sum identities were further considered in \cite{A81} by Andrews. However, the identities in that paper coincide with those in the present paper only in certain cases, and in these cases only when the depth of the multi-sum is either one or two. For example, Andrews proves a multi-sum generalization
of Cauchy's identity
\begin{equation}\label{ceq}
\sum_{n=0}^{\infty}\frac{q^{n^2}z^n}{(q,zq;q)_n} = \frac{1}{(zq;q)_{\infty}}
\end{equation}
of the form  (\cite[page 12, eq (1.5)]{A81})
\begin{equation}\label{aceq}
\sum_{n_1,n_2,\dots n_{k-1}=0}^{\infty}\frac{q^{N_1^2+N_2^2+\dots N_{k-1}^2}z^{N_1+N_2+\dots N_{k-1}}}
{(q;q)_{n_1}(q;q)_{n_2}\dots (q;q)_{n_{k-1}}(zq;q)_{n_{k-1}}} = \frac{1}{(zq;q)_{\infty}},
\end{equation}
where $N_i=n_i+n_{i+1}+\dots + n_{k-1}$. It can be seen that the $k=2$ case of \eqref{aceq} and the $k=1$ case of \eqref{mseq1}  (after setting $c_1=zq$ and $g(j)=\delta_{0,j}$) both reduce to \eqref{ceq}, but that quite different identities are given for larger $k$ (no matter how the parameters in \eqref{mseq1} are specialized). As another illustration of the differences between the general identities in the two papers, Andrews gives another proof
(\cite[page 16, Corollary 1]{A81}) of \eqref{AGids}, and the right side of this identity coincides with the right side of \eqref{mseq103} when $p=2k+1$, but clearly the left sides are very different. A third difference is that Lemmas 1 and 2 in \cite{A81} are not derivable from the identities in the present paper, primarily for the reason that if the summation indices in Theorems \ref{multsumt} and \ref{multsumtt} are redefined so that they all start at 0 (instead of being nested), the general terms in the multi-sums  contain terms of the form $(x;q)_{N_i}$, rather than $(x;q)_{n_i}$, where we are using the notation defined at \eqref{aceq}. Our identity at \eqref{mnkqid} was also derived by Andrews in \cite{A81} (and also previously by Andrews in \cite{A77}). Andrews \cite[page 18, Eq. (4.3)]{A81} also proved the identity
{\allowdisplaybreaks
\begin{equation}\label{mnk2qid}
\sum_{m, n,r\geq
0}\frac{q^{km^2-(2k-2)mn+kn^2+r^2+mr+nr}}{(q;q)_m(q;q)_n(q;q)_r}=\frac{(-q^k,-q^k,q^{2k};q^{2k})_{\infty}}{(q;q)_{\infty}}.
\end{equation}
}
As another illustration of how the transformations in the present paper diverge from those in the paper of Andrews \cite{A81}  at greater depth (number of summation variables), the corresponding (depth three) identity in the present paper is
\begin{equation}\label{mnktwoqid}
\sum_{m, n,r\geq
0}\frac{q^{km^2-(2k-1)mn+kn^2+r^2+mr}}{(q;q)_{m+r}(q;q)_m(q;q)_n(q;q)_r}=\frac{(-q^k,-q^k,q^{2k};q^{2k})_{\infty}}{(q;q)_{\infty}^2},
\end{equation}
which follows from \eqref{mseq111} upon setting $k=2$, $c_1=q$, $g(j)=q^{kj^2}$ and finally re-indexing the summation variables so that they all start at 0.

The main  identity Chu's 2002 paper \cite[page 581, Lemma 1]{Ch02} derives from the $q$-Pfaff-Saalsch\"utz sum (see \cite[page 355, Eq. (II.12)]{GR04}) and may be expressed as
{\allowdisplaybreaks
\begin{multline}\label{chu02eq}
\prod_{i=1}^{n}\frac{(x_i,y_i;q)_k}{(qa/x_i,qa/y_i;q)_k}
\left(\frac{qa}{x_iy_i}  \right)^k=
\left( \frac{a}{c}\right)^N
\prod_{i=1}^{n}
\frac{(qc/x_{i},qc/y_{i};q)_{N_i}}{(qa/x_i,qa/y_i;q)_{N_i}}\\
\sum_{\tilde{m}\geq 0}
q^{M_n}\prod_{i=1}^{n-1}
\frac{(qa/x_iy_i;q)_{m_i}}{(q;q)_{m_i}}
\frac{(x_{i+1}c/a,y_{i+1}c/a;q)_{M_i}}{(qc/x_i,qc/y_i;q)_{M_i}}
\left(\frac{qa}{x_{i+1}y_{i+1}}  \right)^{M_i}\\
\frac{(qa/x_ny_n;q)_{m_n}}{(q;q)_{m_n}}
\frac{(c,c/a;q)_{M_n}}{(qc/x_n,qc/y_n;q)_{M_n}}
\frac{(c q^{M_n},qa/c;q)_{k}}{(c,q^{1-M_n}a/c;q)_{k}}q^{-kM_n},
\end{multline}
}
where $k$ is a non-negative integer,
\[
M_i=\sum_{j=1}^im_j, \hspace{25pt} q^{1+N_k}=x_ky_k, \hspace{25pt}  N=\sum_{i=1}^nN_i,
\]
and the multiple summation index $\tilde{m}=(m_1,m_2,\dots m_n)$ runs over all $m_i\geq 0$ for $i=1,2,\dots , n$. When $c=a$, this identity simplifies to the main
 identity in  Chu's 2005 paper \cite[page 103, Lemma 2.1]{Ch05},
 \begin{multline}\label{chu05eq}
\prod_{i=1}^{n}\frac{(x_i,y_i;q)_k}{(qa/x_i,qa/y_i;q)_k}
\left(\frac{qa}{x_iy_i}  \right)^k=
\sum_{\tilde{m}\geq 0}
\frac{(qa/x_ny_n;q)_{m_n}}{(q;q)_{m_n}}
\frac{(q^{-k},q^ka;q)_{M_n}}{(qa/x_n,qa/y_n;q)_{M_n}}\\
q^{M_n}\prod_{i=1}^{n-1}
\frac{(qa/x_iy_i;q)_{m_i}}{(q;q)_{m_i}}
\frac{(x_{i+1},y_{i+1};q)_{M_i}}{(qa/x_i,qa/y_i;q)_{M_i}}
\left(\frac{qa}{x_{i+1}y_{i+1}}  \right)^{M_i}.
\end{multline}
  This latter identity was also derived by Andrews \cite[page 19, Eq (5.2)]{A79}, and was the key identity used by him in sections 5 and 6 of that paper, the sections dealing with multi-sums. Several general transformations are subsequently derived by multiplying each side of either \eqref{chu02eq} or \eqref{chu05eq} by $W_k$, where $\{W_k\}$ is an arbitrary sequence, and particular identities are derived by specializing the sequence $W_k$. It is possible to make some comparisons  between the transformations in the present paper and those in the papers of Andrews \cite{A79} and Chu \cite{Ch02,Ch05}, by comparing the identities at \eqref{chu02eq} and  \eqref{chu05eq} with the identity at \eqref{multc}    with $g(j)=\delta_{0,j}$ (the identity at \eqref{multcc} with $g(j)=\delta_{0,j}$
  reduces to a special case of  \eqref{multc}    with $g(j)=\delta_{0,j}$). The most obvious difference is that the summation formulae of Andrews and Chu being considered involve finite sums and finite products, while that at \eqref{multc} involves infinite sums and infinite products. Of course it is a simple matter to convert the infinite product on the right side of \eqref{multc} into a finite product by setting each $a_j=q^{-n_j}$ for positive integers $n_j$. While considering this, we observed a somewhat curious phenomenon - while the right side of \eqref{multc} becomes a finite product, the left side does not necessarily become a finite multi-sum (we are setting $g(j)=\delta_{0,j}$ in \eqref{multc}), as indicated in the following Corollary to Theorem \ref{multsumt}.
  \begin{corollary}
  Let $n_1$, $n_2$ $\dots$, $n_k$ be positive integers, and $b_1, \dots , b_k$,  $c_1, \dots , c_k$ be complex numbers.  Then
\begin{multline}\label{multcfin}
\sum_{\vec{m}}\prod_{j=1}^{k-1}
\frac{(q^{m_{j+1}-n_j},b_j;q)_{m_j-m_{j+1}}(c_j/b_j;q)_{m_{j+1}}}
{(c_j;q)_{m_j}(q;q)_{m_j-m_{j+1}}}\left(\frac{c_jq^{n_j}}{b_j}\right)^{m_j-m_{j+1}}\\
\times
\frac{(q^{-n_k},b_k;q)_{m_k}}
{(c_k,q;q)_{m_k}}\left( \frac{c_kq^{n_k}}{b_k}\right)^{m_k}
=\prod_{j=1}^{k}\frac{(c_j/b_j;q)_{n_j}}{(c_j;q)_{n_j}},
\end{multline}
provided either the multi-sum terminates, or the values of the parameters are such that it converges if it does not terminate. The multi-sum terminates
if and only if $n_1\geq n_2 \geq \dots \geq n_k$.
\end{corollary}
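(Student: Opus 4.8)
The plan is to read off \eqref{multcfin} as a specialization of Theorem~\ref{multsumt}, and then to prove the termination statement by inspecting which Pochhammer factors in the general term can vanish. For the identity, set $g(j)=\delta_{0,j}$ and $a_j=q^{-n_j}$ (with each $n_j$ a positive integer) in \eqref{multc}. On the left-hand side the factors built from $a_j$ become exactly the factors $(q^{m_{j+1}-n_j};q)_{m_j-m_{j+1}}$ and $(q^{-n_k};q)_{m_k}$ of \eqref{multcfin}. On the right-hand side, \eqref{multc} is a product of infinite $q$-products; replacing $a_j$ by $q^{-n_j}$ causes the $a_j$-dependent factors to pair off, and each resulting pair collapses by the elementary identity $(x;q)_\infty/(xq^{n};q)_\infty=(x;q)_{n}$, so that the whole right-hand side becomes the finite product $\prod_{j=1}^{k}(c_j/b_j;q)_{n_j}/(c_j;q)_{n_j}$. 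The proviso in the statement (the multi-sum terminates, or else converges) is precisely the hypothesis under which \eqref{multc} may be specialized in this way. As a sanity check on the structure, in the terminating case one can also derive \eqref{multcfin} directly by iterating the $q$-Chu--Vandermonde evaluation ${}_2\phi_1(q^{-n},b;c;q,cq^{n}/b)=(c/b;q)_{n}/(c;q)_{n}$ from the outermost index inward: writing $m_1=m_2+t$, the sum over $t\geq0$ is a ${}_2\phi_1(q^{m_2-n_1},b_1;c_1q^{m_2};q,c_1q^{n_1}/b_1)$ whose value combines with the $m_1$-free factor $(c_1/b_1;q)_{m_2}/(c_1;q)_{m_2}$ to leave $(c_1/b_1;q)_{n_1}/(c_1;q)_{n_1}$ with no residual $m_2$-dependence, and induction on $k$ finishes the argument.

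For the termination criterion, recall that $\vec m$ ranges over $m_1\geq m_2\geq\cdots\geq m_k\geq0$. The only factors of the general term in \eqref{multcfin} that can vanish are the numerator Pochhammers $(q^{m_{j+1}-n_j};q)_{m_j-m_{j+1}}$ for $1\leq j\leq k-1$ and $(q^{-n_k};q)_{m_k}$. Writing $(q^{m_{j+1}-n_j};q)_{m_j-m_{j+1}}=\prod_{i=0}^{m_j-m_{j+1}-1}(1-q^{m_{j+1}-n_j+i})$, this vanishes precisely when $m_{j+1}\leq n_j\leq m_j-1$; and $(q^{-n_k};q)_{m_k}$ vanishes precisely when $m_k>n_k$. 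Hence a term is nonzero only if $m_k\leq n_k$ and, for each $j\in\{1,\dots,k-1\}$, either $m_j\leq n_j$ or $m_{j+1}>n_j$.

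Suppose first that $n_1\geq n_2\geq\cdots\geq n_k$. Starting from $m_k\leq n_k$, a downward induction shows $m_j\leq n_j$ for all $j$: if $m_{j+1}\leq n_{j+1}$ then $m_{j+1}\leq n_{j+1}\leq n_j$, which excludes $m_{j+1}>n_j$ and hence forces $m_j\leq n_j$. Thus every nonzero term of the multi-sum satisfies $0\leq m_k\leq\cdots\leq m_1\leq n_1$, so the sum is finite. Conversely, if the $n_j$ are not weakly decreasing, choose $j\in\{1,\dots,k-1\}$ with $n_j<n_{j+1}$; for each integer $N\geq n_j+1$ take $\vec m$ with $m_1=\cdots=m_j=N$, $m_{j+1}=n_j+1$, and $m_{j+2}=\cdots=m_k=0$. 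This $\vec m$ is admissible, and none of the potentially-vanishing factors vanish: at a level $i<j$ one has $m_i=m_{i+1}$, so $m_{i+1}\leq n_i\leq m_i-1$ is impossible; at level $j$ one has $m_{j+1}=n_j+1>n_j$; at level $j+1$ (or, when $j=k-1$, in the factor $(q^{-n_k};q)_{m_k}$) one has $m_{j+1}=n_j+1\leq n_{j+1}$ (respectively $\leq n_k$) because $n_j<n_{j+1}$; and at the remaining levels and the final factor $m_i=0\leq n_i$. So infinitely many terms are nonzero and the sum does not terminate; combining the two directions gives the stated equivalence.

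The only step that calls for more than routine bookkeeping is the reverse direction of the termination criterion, namely producing an explicit infinite family of $\vec m$ on which every numerator Pochhammer is simultaneously nonzero; the choice above does this. The low-dimensional and degenerate cases are harmless: when $k=1$ there are no factors $(q^{m_{j+1}-n_j};q)_{m_j-m_{j+1}}$, the condition $n_1\geq\cdots\geq n_k$ is vacuous, and the single sum always terminates because of the factor $(q^{-n_1};q)_{m_1}$; and allowing some $n_j=0$ changes nothing in the argument.
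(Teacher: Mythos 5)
Your derivation of \eqref{multcfin} is exactly the route the paper takes: the corollary appears in the introduction with no formal proof beyond the surrounding remark that one sets $g(j)=\delta_{0,j}$ and $a_j=q^{-n_j}$ in \eqref{multc}, and your collapsing of the infinite products via $(x;q)_\infty/(xq^{n};q)_\infty=(x;q)_n$ is the intended (and correct) computation. The termination criterion is asserted in the paper without any argument; your two-directional proof --- downward induction from $m_k\leq n_k$ for sufficiency, and the explicit infinite family $m_1=\cdots=m_j=N$, $m_{j+1}=n_j+1$, $m_{j+2}=\cdots=m_k=0$ for necessity --- is sound and fills that gap. The only implicit caveat is that the ``only if'' direction requires $b_j$ and $c_j/b_j$ not to be of the form $q^{-N}$, so that no Pochhammer other than the $a_j$-derived ones can vanish; this is the same genericity the paper tacitly assumes.
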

Observe that the $q$-products on the right side of \eqref{multcfin} may be of different orders, in contrast to those on the left side of \eqref{chu02eq} or  \eqref{chu05eq}, which are all of order $k$.
As regards infinite identities, setting $g(j)=\delta_{0,j}$ in \eqref{multc},  replacing $c_j$ with $x_j$ and $b_j$ with $x_j/y_j$, and then re-indexing the summation variables so that each $m_j$ runs independently over the range $m_j \geq 0$,  gives rise to the summation formula (assuming the choice of parameters leads to convergence of the multi-sum)
\begin{multline}\label{multcinf}
\sum_{m_1,m_2,\dots,m_k\geq 0}\prod_{j=1}^{k-1}\frac{(a_j;q)_{M_j}(y_j;q)_{M_{j+1}}(x_j/y_j;q)_{m_j}}
{(x_j;q)_{M_j}(a_j;q)_{M_{j+1}}(q;q)_{m_j}}\left( \frac{y_j}{a_j}\right)^{m_j}\\
\times \frac{(a_k;q)_{m_k}(x_k/y_k;q)_{m_k}}
{(x_k;q)_{m_k}(q;q)_{m_k}}\left( \frac{y_k}{a_k}\right)^{m_k}
=\prod_{j=1}^{k}\frac{(x_j/a_j,y_j;q)_{\infty}}{(x_j,y_j/a_j;q)_{\infty}},
\end{multline}
where this time $M_i=\sum_{j=i}^km_j$. The special case derived by setting each $a_j=a$ also follows from \eqref{chu05eq} and was stated by Chu \cite[page 103, Corollary 2.2]{Ch05}:
{\allowdisplaybreaks
\begin{multline}\label{multcinf2}
\sum_{m_1,m_2,\dots,m_k\geq 0}(a;q)_{M_1}\prod_{j=1}^{k}\frac{(y_j;q)_{M_{j+1}}(x_j/y_j;q)_{m_j}}
{(x_j;q)_{M_j}(q;q)_{m_j}}\left( \frac{y_j}{a}\right)^{m_j}\\
=\prod_{j=1}^{k}\frac{(x_j/a,y_j;q)_{\infty}}{(x_j,y_j/a;q)_{\infty}}.
\end{multline}
}
The further specialization derived by letting each $y_j \to 0$ and $a \to \infty$, namely
\begin{multline}\label{multcinf3}
\sum_{m_1,m_2,\dots,m_k\geq 0}q^{M_1(M_1-1)/2}\prod_{j=1}^{k}\frac{x_j^{m_j}q^{m_j(m_j-1)/2}}
{(x_j;q)_{M_j}(q;q)_{m_j}}\left( \frac{y_j}{a}\right)^{m_j}\\
=\frac{1}{\prod_{j=1}^{k}(x_j;q)_{\infty}},
\end{multline}
was also stated by Chu \cite[Corollary 2.3]{Ch05}, Andrews \cite[Eq. (6.1)]{A79} and Milne \cite[Thm. 3.1]{M80}.
The two identities in Corollary \ref{multeuler} may be derived as special cases of the above identity.

Another group of multi-sum identities contains generalizations of classical single-sum transformation- and summation identities to multi-sum extensions.
See the papers by Gustafson \cite{G87},  Milne and Schlosser \cite{MS02},  Rosengren and Schlosser \cite{RS03}, and Spiridonov and Warnaar \cite{SW11}, and other papers listed in the bibliography of these papers, for some examples.

One of the two main result in the present paper is the the multi-sum transformation formula contained in the following theorem.

\begin{theorem}\label{multsumt}
Let $|q|<1$, $k\geq 1$ be a positive integer,  $a_1, \dots , a_k$,  $b_1, \dots , b_k$,  $c_1, \dots , c_k$ be complex numbers, and $\{g(j)\}_{j=0}^{\infty}$ be a sequence of numbers such that both series below converge. Let the sum on the left below be over all integer  $k+1$-tuples $\vec{m}=(m_1,m_2,\dots, m_{k+1})$ 
with $m_1\geq m_2\geq \dots \geq m_k\geq m_{k+1}\geq 0$. Then
\begin{multline}\label{multc}
\sum_{\vec{m}}\prod_{j=1}^{k}\frac{(a_j;q)_{m_j}(c_j/b_j;q)_{m_{j+1}}(b_j;q)_{m_j-m_{j+1}}}
{(c_j;q)_{m_j}(a_j;q)_{m_{j+1}}(q;q)_{m_j-m_{j+1}}}\left( \frac{c_j}{a_jb_j}\right)^{m_j-m_{j+1}} g(m_{k+1})\\
=\prod_{j=1}^{k}\frac{(c_j/a_j,c_j/b_j;q)_{\infty}}{(c_j,c_j/a_jb_j;q)_{\infty}}\sum_{j=0}^{\infty}g(j).
\end{multline}
\end{theorem}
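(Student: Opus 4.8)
The plan is to prove \eqref{multc} by induction on $k$, with the $q$-Gauss summation theorem
\[
\sum_{n=0}^{\infty}\frac{(A;q)_n(B;q)_n}{(C;q)_n(q;q)_n}\left(\frac{C}{AB}\right)^n=\frac{(C/A,C/B;q)_{\infty}}{(C,C/AB;q)_{\infty}}
\]
(see, e.g., \cite[Eq.~(II.8)]{GR04}) as the single engine that collapses the multi-sum one variable at a time, starting from the innermost index $m_1$. The base case $k=1$ is exactly the computation carried out in the inductive step below.

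For the inductive step, notice that in the summand of \eqref{multc} the index $m_1$ appears only in the $j=1$ factor. Fixing $m_2,\dots,m_{k+1}$ and writing $n=m_1-m_2\geq0$, one has $(a_1;q)_{m_1}=(a_1;q)_{m_2}(a_1q^{m_2};q)_n$ and $(c_1;q)_{m_1}=(c_1;q)_{m_2}(c_1q^{m_2};q)_n$, so the $j=1$ factor rewrites as
\[
\frac{(c_1/b_1;q)_{m_2}}{(c_1;q)_{m_2}}\cdot\frac{(a_1q^{m_2};q)_n(b_1;q)_n}{(c_1q^{m_2};q)_n(q;q)_n}\left(\frac{c_1}{a_1b_1}\right)^n .
\]
The key observation is that the factors of $q^{m_2}$ cancel inside $c_1/(a_1b_1)$, so summing over $n\geq0$ is precisely a $q$-Gauss sum with $A=a_1q^{m_2}$, $B=b_1$, $C=c_1q^{m_2}$. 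Using $(c_1q^{m_2}/b_1;q)_{\infty}=(c_1/b_1;q)_{\infty}/(c_1/b_1;q)_{m_2}$ and $(c_1q^{m_2};q)_{\infty}=(c_1;q)_{\infty}/(c_1;q)_{m_2}$, the $q$-Gauss evaluation produces exactly the factor $(c_1;q)_{m_2}/(c_1/b_1;q)_{m_2}$, which cancels the $m_2$-dependent prefactor displayed above. Hence the $m_1$-sum contributes only the constant $(c_1/a_1,c_1/b_1;q)_{\infty}/(c_1,c_1/a_1b_1;q)_{\infty}$, with no residual dependence on $m_2$.

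What is left is a sum over $m_2\geq\dots\geq m_{k+1}\geq0$ with summand $\prod_{j=2}^{k}(\cdots)\,g(m_{k+1})$; after relabelling $m_i\mapsto m_{i+1}$ this is the left side of \eqref{multc} with $k$ replaced by $k-1$, parameters $a_2,\dots,a_k$, $b_2,\dots,b_k$, $c_2,\dots,c_k$, and the same sequence $g$. Invoking the inductive hypothesis and reinstating the $j=1$ constant yields \eqref{multc}. The one delicate point is the rearrangement of the iterated sum: under the stated convergence hypothesis, strengthened to absolute convergence on a suitable nonempty open set of parameters (e.g.\ $|c_j/a_jb_j|<1$ with $g$ small enough), Fubini's theorem legitimizes peeling off the $m_1$-sum, and the general convergent case follows since both sides of \eqref{multc} are analytic in the parameters. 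I expect this convergence-and-interchange bookkeeping, rather than the algebra, to be the only real obstacle.
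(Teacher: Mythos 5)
Your proof is correct and takes essentially the same route as the paper: the paper first establishes the $k=1$ case (its Theorem~\ref{t3}, proved by exactly your computation --- substituting $n=m_1-m_2$ and evaluating the resulting inner sum by the $q$-Gauss formula, with the same cancellation of the $m_2$-dependent factors) and then obtains general $k$ by iterating that identity with a composite $g$, which is your induction in different clothing. Your explicit attention to the Fubini/absolute-convergence step is a point the paper passes over silently, but it does not change the argument.
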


Two special cases of this identity are contained in the following corollary.
\begin{corollary}\label{multeuler}
Let $k\geq 1$ be an integer, and let the sum on the left be over all integer  $k$-tuples $\vec{m}=(m_1,m_2,\dots, m_{k})$ satisfying $m_1\geq m_2\geq \dots \geq m_k\geq 0$. Then
\begin{equation}\label{mseq10}
\sum_{\vec{m}}\frac{q^{m_1(m_1-m_2)+m_2(m_2-m_3)+\dots +m_{k-1}(m_{k-1}-m_{k})+m_k^2}}
{(q;q)_{m_1}(q;q)_{m_1-m_{2}}  \dots (q;q)_{m_{k-1}}(q;q)_{m_{k-1}-m_{k}}(q;q)_{m_k}^2 }
=\frac{1}{(q;q)_{\infty}^k};
\end{equation}
\begin{multline}\label{mseq102}
\sum_{\vec{m}}\frac{q^{k\left[m_1(m_1-m_2)+m_2(m_2-m_3)+\dots +m_{k-1}(m_{k-1}-m_{k})+m_k^2-m_1\right]+m_1+m_2+\dots+m_k}}
{(q^k;q^k)_{m_k}\prod_{i=1}^{k-1}(q^k;q^k)_{m_i-m_{i+1}}\prod_{i=1}^k(q^i;q^k)_{m_i} }\\
=\frac{1}{(q;q)_{\infty}}.
\end{multline}
\end{corollary}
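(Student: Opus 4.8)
\emph{Proof strategy.} Both identities will be derived as limiting specializations of Theorem \ref{multsumt}. The case $k=1$ of each is the Durfee-square dissection $\sum_{n\ge0}q^{n^2}/(q;q)_n^2=1/(q;q)_\infty$ (with base $q$ for both, since at $k=1$ the general term of \eqref{mseq102} collapses to $q^{m_1^2}/(q;q)_{m_1}^2$), so assume $k\ge2$ and apply Theorem \ref{multsumt} with its parameter replaced by $k-1$; its multi-sum then runs over $(m_1,\dots,m_k)$ with $m_1\ge\cdots\ge m_k\ge0$ and the free sequence enters as $g(m_k)$. For \eqref{mseq10} keep the base $q$ and set $c_j=q$ ($1\le j\le k-1$); for \eqref{mseq102} first replace $q$ by $q^k$ throughout the theorem and set $c_j=q^{j}$ ($1\le j\le k-1$). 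In both cases then let every $a_j$ and every $b_j$ tend to infinity.

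The reason this works is that, although several $q$-Pochhammer symbols in the $j$-th factor of \eqref{multc} diverge as $a_j,b_j\to\infty$, the divergences cancel factor by factor: using $(x;q)_n=(-x)^nq^{n(n-1)/2}(1+o(1))$ as $x\to\infty$, in each $j$-th factor the net power of $a_j$ and the net power of $b_j$ are both $0$, the signs from the numerator and denominator asymptotics cancel, and $(c_j/b_j;q)_{m_{j+1}}\to1$. Hence each $j$-th factor has a finite limit; collecting the surviving powers of $q$ (resp. $q^k$) and telescoping gives the limiting $j$-th factor $q^{m_j(m_j-m_{j+1})}/\bigl((q;q)_{m_j}(q;q)_{m_j-m_{j+1}}\bigr)$ for \eqref{mseq10}, and $q^{\,j(m_j-m_{j+1})+k(m_j-m_{j+1})(m_j-1)}/\bigl((q^j;q^k)_{m_j}(q^k;q^k)_{m_j-m_{j+1}}\bigr)$ for \eqref{mseq102}. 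Simultaneously each factor $(c_j/a_j,c_j/b_j;q)_\infty/(c_j,c_j/a_jb_j;q)_\infty$ on the right of \eqref{multc} tends to $1/(c_j;q)_\infty$, that is, to $1/(q;q)_\infty$ for \eqref{mseq10} and to $1/(q^j;q^k)_\infty$ for \eqref{mseq102}.

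It remains to choose $g$. Comparing the limiting $j$-th factors with the general terms of \eqref{mseq10}, \eqref{mseq102} shows that exactly $q^{m_k^2}/(q;q)_{m_k}^2$, resp. $q^{km_k^2}/(q^k;q^k)_{m_k}^2$, is missing (for \eqref{mseq102} a short computation, telescoping $\sum_i i(m_i-m_{i+1})$ and rewriting $(m_i-m_{i+1})(m_i-1)$, shows the two exponents differ by precisely $km_k^2$, which the $q^{kj^2}$ below supplies). Thus take $g(j)=q^{j^2}/(q;q)_j^2$, resp. $g(j)=q^{kj^2}/(q^k;q^k)_j^2$; by the Durfee-square identity in base $q$, resp. $q^k$, $\sum_{j\ge0}g(j)=1/(q;q)_\infty$, resp. $1/(q^k;q^k)_\infty$. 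Multiplying the limiting right-hand product of \eqref{multc} by this sum yields $(1/(q;q)_\infty)^{k-1}\cdot1/(q;q)_\infty=1/(q;q)_\infty^k$ for \eqref{mseq10}, and $\prod_{j=1}^{k-1}1/(q^j;q^k)_\infty\cdot1/(q^k;q^k)_\infty=\prod_{j=1}^{k}1/(q^j;q^k)_\infty=1/(q;q)_\infty$ for \eqref{mseq102}, using the residue-class dissection $(q;q)_\infty=\prod_{i=1}^{k}(q^i;q^k)_\infty$.

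The only step that is not routine bookkeeping is the interchange of the limits $a_j,b_j\to\infty$ with the infinite multi-summation (the analogous limit in the product on the right is harmless). This is the one place I expect a genuine obstacle: one needs the convergence to be dominated uniformly as the $a_j,b_j$ grow, and the Gaussian-type factor $q^{m_j(m_j-m_{j+1})}$ together with $|q|<1$ furnishes such a domination, legitimising the interchange. The remainder, namely the $q$-Pochhammer cancellations and the exponent identities, is a direct verification.
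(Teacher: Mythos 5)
Your proof is correct, but it routes through Theorem \ref{multsumt} differently from the paper. The paper first records the limit $a_j,b_j\to\infty$ of \eqref{multc} as a separate corollary (equation \eqref{mseq1}, at depth $k$, so the sum is over a $(k+1)$-tuple), and then obtains both identities by taking $g(j)=\delta_{0,j}$, which forces $m_{k+1}=0$ and collapses the innermost sum for free; the surviving factor $(q;q)_{m_k-m_{k+1}}=(q;q)_{m_k}$ then pairs with $(c_k;q)_{m_k}=(q;q)_{m_k}$ to produce the square in the denominator, and $\sum_j g(j)=1$ so no auxiliary summation is needed. You instead work at depth $k-1$ and feed in $g(j)=q^{j^2}/(q;q)_j^2$ (resp.\ $q^{kj^2}/(q^k;q^k)_j^2$), paying for the missing innermost factor with Jacobi's identity $\sum_j q^{j^2}/(q;q)_j^2=1/(q;q)_\infty$. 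Your bookkeeping checks out: the exponent identity $\tbinom{m}{2}-\tbinom{n}{2}+\tbinom{m-n}{2}=(m-n)(m-1)$ and the telescoping $\sum_j j(m_j-m_{j+1})=\sum_j m_j-(k-1)m_k$ reproduce exactly the general terms of \eqref{mseq10} and \eqref{mseq102}, and the residue-class dissection handles the product side of \eqref{mseq102}. What each approach buys: the paper's $\delta$-trick is self-contained (it needs no classical summation beyond the $q$-Gauss sum already used to prove Theorem \ref{multsumt}) and uniform in $k\ge1$, whereas yours imports Jacobi's identity --- which is itself the $k=1$ case of \eqref{mseq10}, so you rightly treat $k=1$ as the base case --- but in exchange makes visible why the factor $(q;q)_{m_k}^2$ and the extra $1/(q;q)_\infty$ appear. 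One further point in your favour: you flag the interchange of the limits $a_j,b_j\to\infty$ with the infinite multi-sum, which the paper passes over in silence; the dominated-convergence remark you give (uniform control from the Gaussian factor $q^{m_j(m_j-m_{j+1})}$ for $|q|<1$) is the right justification.
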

The  identity of Jacobi,
\[
\sum_{n=0}^{\infty}\frac{q^{n^2}}{(q;q)_n^2}=\frac{1}{(q;q)_{\infty}},
\]
may be viewed as the $k=1$ case of each of the identities at \eqref{mseq10} and \eqref{mseq102} above, so that each of  \eqref{mseq10} and \eqref{mseq102} embeds Jacobi's identity in an infinite family of identities.
Just as Jacobi's identity has a combinatorial interpretation (each side being the generating function for the number of unrestricted partitions of a positive integer), it maybe that \eqref{mseq10} has a combinatorial interpretation in terms of multipartitions with $k$ components. Similarly, the left side of
\eqref{mseq102} may have an interpretation in terms of $k$-modular partitions. We leave these questions as open problems for the reader.

A variation of Theorem \ref{multsumt} which results in a bilateral infinite series on the single-sum side is given by modifying the innermost sum on the multi-sum side.

\begin{theorem}\label{multsumtt}
Let $|q|<1$, $k\geq 1$ be a positive integer,  $a_1, \dots , a_{k-1}$,  $b_1, \dots , b_{k-1}$,  $c_1, \dots , c_{k-1}$, and $a$ be complex numbers, and $\{g(j)\}_{j=-\infty}^{\infty}$ be a sequence of numbers such that both series below converge. Let the sum on the left below be over all integer  $k+1$-tuples $\vec{m}=(m_1,m_2,\dots, m_{k+1})$ 
with $m_1\geq m_2\geq \dots \geq m_k\geq 0$, $m_k\geq  m_{k+1}>-\infty$. Then
{\allowdisplaybreaks
\begin{multline}\label{multcc}
\sum_{\vec{m}}\prod_{j=1}^{k-1}\frac{(a_j;q)_{m_j}(c_j/b_j;q)_{m_{j+1}}(b_j;q)_{m_j-m_{j+1}}}
{(c_j;q)_{m_j}(a_j;q)_{m_{j+1}}(q;q)_{m_j-m_{j+1}}}\left( \frac{c_j}{a_jb_j}\right)^{m_j-m_{j+1}}\\
\times\frac{(a;q)_{m_k}(q/a;q)_{m_{k+1}}(a;q)_{m_k-m_{k+1}}}
{(q;q)_{m_k}(a;q)_{m_{k+1}}(q;q)_{m_k-m_{k+1}}}\left( \frac{q}{a^2}\right)^{m_k-m_{k+1}} g(m_{k+1})\\
=\frac{(q/a,q/a;q)_{\infty}}{(q,q/a^2;q)_{\infty}}\prod_{j=1}^{k-1}\frac{(c_j/a_j,c_j/b_j;q)_{\infty}}{(c_j,c_j/a_jb_j;q)_{\infty}}
\sum_{j=-\infty}^{\infty}g(j).
\end{multline}
}
\end{theorem}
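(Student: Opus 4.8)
The plan is to prove Theorem~\ref{multsumtt} by the same telescoping mechanism --- iterated $q$-Gauss summation --- that underlies Theorem~\ref{multsumt}, after two observations. First, the factors indexed $j=1,\dots,k-1$ in \eqref{multcc} are literally the corresponding factors in \eqref{multc}. Second, the modified innermost factor is precisely the generic $j=k$ factor of \eqref{multc} with $b_k=a_k=a$ and $c_k=q$; hence the only genuinely new feature is that the innermost summation index $m_{k+1}$ now runs over all of $\mathbb{Z}$ rather than over $\mathbb{Z}_{\ge 0}$.

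The engine is the ``reduction identity'': for a non-negative integer $n$, the substitution $m=n+r$ gives $\sum_{m\ge n}\frac{(a;q)_m(b;q)_{m-n}}{(c;q)_m(q;q)_{m-n}}(c/ab)^{m-n}=\frac{(a;q)_n}{(c;q)_n}\,{}_{2}\phi_{1}(aq^n,b;cq^n;q,c/ab)$, and the $q$-Gauss sum \cite{GR04} evaluates this as $\frac{(c/a,c/b;q)_\infty}{(c,c/ab;q)_\infty}\cdot\frac{(a;q)_n}{(c/b;q)_n}$. What is needed for the present theorem is the extension in which $n$ is an arbitrary integer: with the convention $1/(q;q)_m=0$ for $m\in\mathbb{Z}_{<0}$ and the usual extension of $(x;q)_n$ to $n\in\mathbb{Z}$, I claim that for every $n\in\mathbb{Z}$,
\[
\sum_{m\ge\max(0,\,n)}\frac{(a;q)_m\,(a;q)_{m-n}}{(q;q)_m\,(q;q)_{m-n}}\Bigl(\frac{q}{a^2}\Bigr)^{m-n}
=\frac{(q/a,\,q/a;q)_\infty}{(q,\,q/a^2;q)_\infty}\cdot\frac{(a;q)_n}{(q/a;q)_n}.
\]
For $n\ge 0$ this is the reduction identity with $b=a$, $c=q$. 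For $n=-s$ with $s\ge 1$ the lower limit is $0$; using $(a;q)_{m+s}=(a;q)_s(aq^s;q)_m$ and $(q;q)_{m+s}=(q;q)_s(q^{s+1};q)_m$ and extracting the $s$-dependent factors turns the left side into $(q/a^2)^s\frac{(a;q)_s}{(q;q)_s}\,{}_{2}\phi_{1}(a,aq^s;q^{s+1};q,q/a^2)$, a bona fide convergent ${}_2\phi_1$ to which $q$-Gauss again applies; the elementary identity $(a;q)_{-s}/(q/a;q)_{-s}=(q/a^2)^s(a;q)_s/(q/a;q)_s$ then matches the two sides.

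With these facts the proof telescopes exactly as for Theorem~\ref{multsumt}. Assuming the absolute convergence that the stated hypotheses are intended to secure, reorganise the multi-sum as the iterated sum over $m_1$, then $m_2,\dots,m_k$, and finally over $m_{k+1}$. Summing over $m_1$ with $m_2$ fixed, the reduction identity produces $\frac{(c_1/a_1,c_1/b_1;q)_\infty}{(c_1,c_1/a_1b_1;q)_\infty}\cdot\frac{(a_1;q)_{m_2}}{(c_1/b_1;q)_{m_2}}$, and its $m_2$-dependent part is cancelled exactly by the factor $(c_1/b_1;q)_{m_2}/(a_1;q)_{m_2}$ already present in the $j=1$ factor, so that layer collapses to a constant. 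Iterating over $m_2,\dots,m_{k-1}$ extracts $\prod_{j=1}^{k-1}\frac{(c_j/a_j,c_j/b_j;q)_\infty}{(c_j,c_j/a_jb_j;q)_\infty}$ and leaves only the innermost layer. Summing over $m_k$ by the displayed identity above produces $\frac{(q/a,q/a;q)_\infty}{(q,q/a^2;q)_\infty}\cdot\frac{(a;q)_{m_{k+1}}}{(q/a;q)_{m_{k+1}}}$, which the remaining prefactor $(q/a;q)_{m_{k+1}}/(a;q)_{m_{k+1}}$ --- finite and nonzero for each $m_{k+1}\in\mathbb{Z}$ --- cancels, leaving $\frac{(q/a,q/a;q)_\infty}{(q,q/a^2;q)_\infty}\,g(m_{k+1})$; summing over $m_{k+1}\in\mathbb{Z}$ gives the right-hand side of \eqref{multcc}.

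The step I expect to be the main obstacle is the bookkeeping at the two edges, not any deep estimate: (i) justifying the rearrangement of sums, which I would deduce from absolute convergence --- it is enough to check this when the parameters are specialised so that the multi-sum terminates (or lies in an obvious region of convergence) and then continue analytically in the parameters $a_1,\dots,a_{k-1},b_1,\dots,b_{k-1},c_1,\dots,c_{k-1},a$; and (ii) verifying that the $n<0$ instance of the displayed identity really reproduces, term by term, the same closed form as the $n\ge 0$ instance under the convention for $(x;q)_n$ at negative $n$, where a stray power of $q$ or of $a$ is easy to introduce. Once those are settled the remainder is the proof of Theorem~\ref{multsumt} verbatim.
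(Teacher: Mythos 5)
Your proof is correct and follows essentially the same route as the paper: the outer $k-1$ layers collapse via the $q$-Gauss reduction that is the content of Theorem \ref{t3}, and the innermost bilateral layer collapses via the content of Theorem \ref{t2}, which is exactly how the paper iterates its two double-sum lemmas. The only difference is one of exposition --- you unroll the iteration and work out the $n<0$ case explicitly (which the paper dismisses with ``a similar argument works when $k<0$''), and your verification of the identity $(a;q)_{-s}/(q/a;q)_{-s}=(q/a^2)^s(a;q)_s/(q/a;q)_s$ checks out.
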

In the next identity, which is a special case of the above theorem, the right side coincides with the right side of the identity of Andrews at \eqref{AGids}, when $p$ is odd.
\begin{corollary}\label{corbilat}
Let $k\geq 1$, $p\geq 3$ and $i\leq p/2$  be positive integers, and let the sum on the left be over all integer  $k+1$-tuples $\vec{m}=(m_1,m_2,\dots, m_{k}, m_{k+1})$ satisfying $m_1\geq m_2\geq \dots \geq m_k\geq 0$, $m_k\geq m_{k+1}>-\infty$. Then
\begin{multline}\label{mseq103}
\sum_{\vec{m}}\frac{q^{k\left[m_1(m_1-m_2)+m_2(m_2-m_3)+\dots +m_{k}(m_{k}-m_{k+1})-m_1\right]+m_1+m_2+\dots+m_k}}
{\prod_{j=1}^{k}(q^k;q^k)_{m_j-m_{j+1}}\prod_{j=1}^k(q^j;q^k)_{m_j} }\\
\times (q^{p/2})^{ m_{k+1}^2}(-q^{p/2-i})^{m_{k+1}}
=\frac{(q^i,q^{p-i},q^p;q^p)_{\infty}}{(q;q)_{\infty}}.
\end{multline}
\end{corollary}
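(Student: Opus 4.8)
The plan is to obtain \eqref{mseq103} as a confluent specialization of Theorem \ref{multsumtt}, analogous to the way the single-sum identity \eqref{mseq102} specializes Theorem \ref{multsumt}. In \eqref{multcc} I would first replace $q$ by $q^k$, then set $c_j=q^j$ for $1\le j\le k-1$, take the bilateral sequence to be $g(j)=(q^{p/2})^{j^2}(-q^{p/2-i})^{j}$, and finally let each of $a_1,\dots,a_{k-1}$, $b_1,\dots,b_{k-1}$ and $a$ tend to infinity.

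For the left-hand side I would analyse the $j$-th factor ($1\le j\le k-1$) of the transformed product using the confluences $(x;q^k)_n\sim(-x)^n q^{k\binom n2}$ and $(c/x;q^k)_n\to1$ as $x\to\infty$: all positive powers of $a_j$ and $b_j$ cancel against the power $(c_j/a_jb_j)^{m_j-m_{j+1}}$, leaving
\[
\frac{q^{k(m_j-1)(m_j-m_{j+1})}\,c_j^{\,m_j-m_{j+1}}}{(c_j;q^k)_{m_j}\,(q^k;q^k)_{m_j-m_{j+1}}}
=\frac{q^{k(m_j-1)(m_j-m_{j+1})+j(m_j-m_{j+1})}}{(q^j;q^k)_{m_j}\,(q^k;q^k)_{m_j-m_{j+1}}}.
\]
The special ($j=k$) block of \eqref{multcc} is simply the generic factor with $a_k=b_k=a$ and $c_k=q^k$, so the same computation gives, as $a\to\infty$, the limit $q^{km_k(m_k-m_{k+1})}/\bigl[(q^k;q^k)_{m_k}(q^k;q^k)_{m_k-m_{k+1}}\bigr]$ times $g(m_{k+1})$. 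Multiplying everything, the denominators assemble into $\prod_{j=1}^k(q^k;q^k)_{m_j-m_{j+1}}\prod_{j=1}^k(q^j;q^k)_{m_j}$, and the total $q$-exponent is $E(\vec m)=k\sum_{j=1}^k m_j(m_j-m_{j+1})+\sum_{j=1}^{k-1}(j-k)(m_j-m_{j+1})$. An Abel summation gives $\sum_{j=1}^{k-1}(j-k)(m_j-m_{j+1})=\sum_{j=1}^k m_j-k m_1$, so $E(\vec m)=k\bigl[\sum_{j=1}^k m_j(m_j-m_{j+1})-m_1\bigr]+\sum_{j=1}^k m_j$, which is exactly the exponent in \eqref{mseq103}; since $g(m_{k+1})=(q^{p/2})^{m_{k+1}^2}(-q^{p/2-i})^{m_{k+1}}$, the left side of \eqref{multcc} becomes the left side of \eqref{mseq103}.

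For the right-hand side, after $q\to q^k$ each of $(q^k/a;q^k)_\infty$, $(q^k/a^2;q^k)_\infty$, $(c_j/a_j;q^k)_\infty$, $(c_j/b_j;q^k)_\infty$ and $(c_j/a_jb_j;q^k)_\infty$ tends to $1$, leaving $\frac{1}{(q^k;q^k)_\infty}\prod_{j=1}^{k-1}\frac{1}{(q^j;q^k)_\infty}$, which equals $\frac1{(q;q)_\infty}$ by the factorization $(q;q)_\infty=\prod_{j=1}^k(q^j;q^k)_\infty$. It then remains to evaluate $\sum_{j=-\infty}^\infty g(j)$. Writing $g(j)=(-1)^j(q^{p-i})^j(q^p)^{\binom j2}$ and applying the Jacobi triple product $\sum_{j=-\infty}^\infty(-1)^j z^j Q^{\binom j2}=(z;Q)_\infty(Q/z;Q)_\infty(Q;Q)_\infty$ with $Q=q^p$ and $z=q^{p-i}$, one gets $\sum_{j=-\infty}^\infty g(j)=(q^{p-i};q^p)_\infty(q^i;q^p)_\infty(q^p;q^p)_\infty=(q^i,q^{p-i},q^p;q^p)_\infty$, so the right side of \eqref{multcc} collapses to $(q^i,q^{p-i},q^p;q^p)_\infty/(q;q)_\infty$, as claimed.

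The step I expect to be the main obstacle is justifying the interchange of the limits $a_j,b_j,a\to\infty$ with the (in general infinite) multi-summation. I would handle this by applying Theorem \ref{multsumtt} first for large but finite values of these parameters — checking convergence of both series there — and then passing to the limit by dominated convergence (Tannery's theorem): for $|q|<1$ the terms are, uniformly in the large parameters, bounded by a summable sequence, thanks to the quadratic growth of $E(\vec m)$ in the $m_j$ and the Gaussian-type decay of $g(j)$ as $|j|\to\infty$. The hypotheses $p\ge3$ and $1\le i\le p/2$ guarantee $0<i\le p-i<p$, so that the right side is a genuine theta quotient and both series in \eqref{mseq103} converge; since \eqref{mseq103} arises as the limit of valid identities, it holds.
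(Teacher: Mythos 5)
Your proposal is correct and follows essentially the same route as the paper: the author first records the limiting case $a,a_j,b_j\to\infty$ of Theorem \ref{multsumtt} as the corollary containing \eqref{mseq111}, and then obtains \eqref{mseq103} by the very substitutions you make ($q\to q^k$, $c_j=q^j$, $g(j)=q^{pj^2/2}(-q^{p/2-i})^j$) together with the Jacobi triple product. You merely inline that intermediate corollary and supply the exponent bookkeeping and convergence remarks that the paper leaves to the reader.
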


Perhaps not surprisingly, applications of the $k=1$ case of Theorem \ref{multsumt} are more common in the literature, so we consider this case  in more detail in a later section (actually the $k=1$ case was discovered first, before it was noticed that the process could be iterated to give Theorem \ref{multsumt} in its full generality). One example of an application of this $k=1$ case is the following  identity for the continuous $q$-ultraspherical polynomials, $ C_n(\cos \theta;\beta |q)$.
\begin{corollary}\label{qultra}
If $|c e^{i\theta}/(a\beta)|,\,|c e^{2i\theta}/(a\beta)|<1$, then
\begin{multline}\label{usp1}
\sum_{n=0}^{\infty}\frac{(a;q)_n}{(c;q)_n}\left(\frac{ce^{i\theta}}{a\beta}\right)^n C_n(\cos \theta;\beta |q)\\=
\frac{(c/a,c/\beta;q)_{\infty}}{(c,c/a\beta;q)_{\infty}}
\sum_{n=0}^{\infty}\frac{(a,\beta;q)_n}{(c/\beta,q;q)_n}\left(\frac{ce^{2i\theta}}{a\beta}\right)^n.
\end{multline}
\end{corollary}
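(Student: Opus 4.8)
The plan is to derive \eqref{usp1} directly from the $k=1$ case of Theorem~\ref{multsumt}. In that case \eqref{multc} is a double sum over $m_1\ge m_2\ge 0$, and I would specialise its parameters to $a_1=a$, $b_1=\beta$, $c_1=c$ and take the free sequence to be
\[
g(j)=\frac{(a,\beta;q)_j}{(c/\beta,q;q)_j}\left(\frac{ce^{2i\theta}}{a\beta}\right)^{\!j}.
\]
With this choice the right-hand side of \eqref{multc} is $\dfrac{(c/a,c/\beta;q)_\infty}{(c,c/a\beta;q)_\infty}\sum_{j\ge0}g(j)$, which is exactly the right-hand side of \eqref{usp1}. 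The convergence hypothesis of Theorem~\ref{multsumt} holds under the stated conditions: the single series converges because $|ce^{2i\theta}/(a\beta)|<1$, and (as the computation below shows) the multi-sum is the left-hand side of \eqref{usp1}, which converges because $|ce^{i\theta}/(a\beta)|<1$.

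Next I would simplify the left-hand side of \eqref{multc} under this specialisation. The $m_2$-dependent factors $(c_1/b_1;q)_{m_2}=(c/\beta;q)_{m_2}$ and $(a_1;q)_{m_2}=(a;q)_{m_2}$ in the summand cancel against the factors $(a;q)_{m_2}$ and $(c/\beta;q)_{m_2}$ contributed by $g(m_2)$. Writing $n=m_1$, $\ell=m_2$ and combining the two geometric factors via $(c/(a\beta))^{\,n-\ell}(ce^{2i\theta}/(a\beta))^{\ell}=(c/(a\beta))^{\,n}e^{2i\ell\theta}$, the left-hand side becomes
\[
\sum_{n\ge 0}\frac{(a;q)_n}{(c;q)_n}\left(\frac{c}{a\beta}\right)^{\!n}\sum_{\ell=0}^{n}\frac{(\beta;q)_{n-\ell}\,(\beta;q)_{\ell}}{(q;q)_{n-\ell}\,(q;q)_{\ell}}\,e^{2i\ell\theta}.
\]

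The one step that is not pure bookkeeping is the identification of the inner finite sum, call it $I_n$. Re-indexing by $\ell\mapsto n-\ell$ gives $I_n=e^{2in\theta}\sum_{\ell=0}^{n}\frac{(\beta;q)_\ell(\beta;q)_{n-\ell}}{(q;q)_\ell(q;q)_{n-\ell}}e^{-2i\ell\theta}$, and then writing $e^{-2i\ell\theta}=e^{-in\theta}e^{i(n-2\ell)\theta}$ and invoking the standard series representation $C_n(\cos\theta;\beta|q)=\sum_{k=0}^{n}\frac{(\beta;q)_k(\beta;q)_{n-k}}{(q;q)_k(q;q)_{n-k}}e^{i(n-2k)\theta}$ shows that $I_n=e^{in\theta}C_n(\cos\theta;\beta|q)$. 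Substituting this back and absorbing $e^{in\theta}$ through $(c/(a\beta))^{\,n}e^{in\theta}=(ce^{i\theta}/(a\beta))^{\,n}$ turns the displayed double sum into the left-hand side of \eqref{usp1}, which completes the argument.

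I do not anticipate a serious difficulty: once the specialisation is made, the proof is cancellation of the $m_2$-Pochhammer symbols, recombination of two geometric factors into one, and a single re-indexing of a finite sum. The only place the hypotheses $|ce^{i\theta}/(a\beta)|,\,|ce^{2i\theta}/(a\beta)|<1$ play a role is in justifying the application of Theorem~\ref{multsumt}, and the only point that deserves an explicit line is the identification $I_n=e^{in\theta}C_n(\cos\theta;\beta|q)$.
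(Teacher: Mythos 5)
Your proposal is correct and is in substance the same argument as the paper's: the paper deduces \eqref{usp1} from the Bailey-type transform of Theorem \ref{bt2} with $b=\beta$ and $\alpha_n=(\beta;q)_n e^{2in\theta}/(q;q)_n$, so that $\beta_n=e^{in\theta}C_n(\cos\theta;\beta|q)$, and since Theorem \ref{bt2} is itself just Theorem \ref{t3} (the $k=1$ case of Theorem \ref{multsumt}) repackaged, your choice of $g(j)=\frac{(a,\beta;q)_j}{(c/\beta,q;q)_j}\bigl(ce^{2i\theta}/(a\beta)\bigr)^j$ is exactly the unwound form of that substitution. The key step in both is the same identification of the inner convolution sum with $e^{in\theta}C_n(\cos\theta;\beta|q)$, which you carry out correctly.
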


The remainder of the paper proceeds as follows. We first prove two general transformations, each of which converts a double sum to a single sum, and then Theorem \ref{multsumt} is derived by iterating the result in one of these theorems. In the  section following that we consider some explicit applications of the $k=1$ case of Theorem \ref{multsumt}. Next, one of these transformations is recast as a Bailey-type transformation, and several applications of this are given. Finally, we pose a number of open questions.

 We employ the usual notations:
 {\allowdisplaybreaks
\begin{align*}
           (a;q)_n &:= (1-a)(1-aq)\cdots (1-aq^{n-1}), \\
          (a_1, a_2, \dots, a_j; q)_n &:= (a_1;q)_n (a_2;q)_n \cdots (a_j;q)_n ,\\
           (a;q)_\infty &:= (1-a)(1-aq)(1-aq^2)\cdots, \mbox{ and }\\
          (a_1, a_2, \dots, a_j; q)_\infty &:= (a_1;q)_\infty (a_2;q)_\infty \cdots (a_j;q)_\infty.
\end{align*}}

\section{Background and  Main Results}\label{sec:1}

In Pak's wonderful survey \cite{P06}, he asks (problem (2.3.2)) for a combinatorial proof
of the following identity (Pak's notation has been modified to the more usual $q$-series
notation):
\begin{equation}\label{eq1}
\sum_{m, n\geq
0}\frac{q^{m^2-mn+n^2}z^{m-n}}{(q;q)_m(q;q)_n}
=\frac{1}{(q;q)_{\infty}}\sum_{k=-\infty}^{\infty}z^kq^{k^2}.
\end{equation}
While searching for an \emph{analytic} proof of this identity, it became clear that a more general
identity was true, namely (assuming convergence),
\begin{equation*}
\sum_{m, n\geq 0}\frac{q^{mn}g(m-n)}{(q;q)_m(q;q)_n}
=\frac{1}{(q;q)_{\infty}}\sum_{k=-\infty}^{\infty}g(k).
\end{equation*}

In fact an even more general transformation holds.

\begin{theorem}\label{t2}
Let $g(k)$ be any function such that both series in \eqref{t2eq1}
converge. Then
\begin{multline}\label{t2eq1}
\sum_{m, n\geq 0}\frac{(a;q)_m(a;q)_n(q/a;q)_{m-n}}{(q;q)_m(q;q)_n(a;q)_{m-n}}\left(\frac{q}{a^2}\right)^{n}g(m-n)\\
=\frac{(q/a,q/a;q)_{\infty}}{(q,q/a^2;q)_{\infty}}
\sum_{k=-\infty}^{\infty}g(k).
\end{multline}
\end{theorem}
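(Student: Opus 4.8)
The plan is to collapse the double sum over $m,n\ge 0$ into a single bilateral sum indexed by $k=m-n$, and to show that, after extracting the part of the summand depending only on $k$, the remaining inner sum evaluates --- via the $q$-Gauss summation --- to a quantity \emph{independent of $k$}, namely the infinite product on the right of \eqref{t2eq1}. Granting that the double series may be regrouped according to the common value of $k=m-n$ (legitimate, e.g., under absolute convergence, which holds when $|q/a^2|<1$ and $\sum_k|g(k)|<\infty$), it suffices to evaluate, for each fixed $k$, the sum of all terms with $m-n=k$.

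First consider the pairs with $m\ge n$, so $k=m-n\ge 0$ and $m=n+k$; the summand factors as
\[
\frac{(q/a;q)_{k}}{(a;q)_{k}}\,g(k)\;\cdot\;\frac{(a;q)_{n+k}(a;q)_{n}}{(q;q)_{n+k}(q;q)_{n}}\left(\frac{q}{a^{2}}\right)^{\!n}.
\]
Using $(a;q)_{n+k}=(a;q)_{k}(aq^{k};q)_{n}$ and $(q;q)_{n+k}=(q;q)_{k}(q^{k+1};q)_{n}$, the sum over $n\ge 0$ becomes $\dfrac{(a;q)_{k}}{(q;q)_{k}}\,{}_2\phi_1\big(aq^{k},a;q^{k+1};q,q/a^{2}\big)$. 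The crucial observation is that the argument $q/a^{2}$ is exactly $q^{k+1}/(aq^{k}\cdot a)$, so this ${}_2\phi_1$ is balanced in the sense required by the $q$-Gauss sum (see \cite{GR04}), which evaluates it as $\dfrac{(q/a;q)_{\infty}(q^{k+1}/a;q)_{\infty}}{(q^{k+1};q)_{\infty}(q/a^{2};q)_{\infty}}$. Splitting off finite products via $(q^{k+1};q)_{\infty}=(q;q)_{\infty}/(q;q)_{k}$ and $(q^{k+1}/a;q)_{\infty}=(q/a;q)_{\infty}/(q/a;q)_{k}$, one finds $\dfrac{(a;q)_k}{(q;q)_k}\,{}_2\phi_1(\cdots)=\dfrac{(a;q)_k}{(q/a;q)_k}\cdot\dfrac{(q/a,q/a;q)_\infty}{(q,q/a^2;q)_\infty}$, so every $k$-dependent factor cancels against the prefactor $(q/a;q)_{k}/(a;q)_{k}$, and the total contribution of the pairs with $m\ge n$ is $\dfrac{(q/a,q/a;q)_{\infty}}{(q,q/a^{2};q)_{\infty}}\sum_{k\ge 0}g(k)$.

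For the pairs with $m<n$ put $\ell=n-m>0$ and $n=m+\ell$. Here one needs the negative-index evaluation $(x;q)_{-\ell}=(-q/x)^{\ell}q^{\binom{\ell}{2}}/(q/x;q)_{\ell}$, which gives $\dfrac{(q/a;q)_{-\ell}}{(a;q)_{-\ell}}=\big(a^{2}/q\big)^{\ell}\dfrac{(q/a;q)_{\ell}}{(a;q)_{\ell}}$; this power of $a^{2}/q$ precisely cancels the extra factor $(q/a^{2})^{\ell}$ coming from $(q/a^{2})^{n}=(q/a^{2})^{m}(q/a^{2})^{\ell}$, and the sum over $m\ge 0$ is again the ${}_2\phi_1$ of the previous paragraph with $k$ replaced by $\ell$. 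Hence these pairs contribute $\dfrac{(q/a,q/a;q)_{\infty}}{(q,q/a^{2};q)_{\infty}}\sum_{\ell\ge 1}g(-\ell)$, and adding the two halves yields the right-hand side of \eqref{t2eq1}.

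The Pochhammer manipulations are routine; the one genuine idea is recognising that the inner sum is a balanced ${}_2\phi_1$, so that $q$-Gauss applies cleanly with the argument already in the correct form. The places that need care are the negative-index computation for $m<n$ and the verification that the $m\ge n$ and $m<n$ halves really do produce the \emph{same} constant $\dfrac{(q/a,q/a;q)_{\infty}}{(q,q/a^{2};q)_{\infty}}$; I expect this bilateral bookkeeping, together with the (mild) justification for rearranging the double series, rather than any single estimate, to be the main thing to get right.
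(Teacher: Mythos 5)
Your proposal is correct and follows essentially the same route as the paper: group the double sum by $k=m-n$, recognise the inner sum as a ${}_2\phi_1$ with argument $c/(ab)$, and apply the $q$-Gauss summation so that the $k$-dependent factors cancel. In fact you supply more detail than the paper for the $m<n$ half (the negative-index Pochhammer computation), which the paper dismisses with ``a similar argument works when $k<0$.''
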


Before coming to the proof, we first recall the $q$-Gauss sum
\begin{equation}\label{qgs}
\sum_{n=0}^{\infty}\frac{(a,b;q)_n}{(c,q;q)_n}\left(\frac{c}{ab}\right)^n
=\frac{(c/a,c/b;q)_{\infty}}{(c,c/ab;q)_{\infty}}.
\end{equation}
\begin{proofof}{ Theorem \ref{t2}}
In \eqref{t2eq1}, set $m-n=k$ or $m=n+k$, so that the left
side becomes
\[
\sum_{k=-\infty}^{\infty}\frac{(q/a;q)_k}{(a;q)_k}g(k)\sum_{n}\frac{(a;q)_{n+k}(a;q)_n}
{(q;q)_{n+k}(q;q)_n}\left(\frac{q}{a^2}\right)^{n},
\]
where the sum on $n$ is over $n\geq 0$ if $k\geq 0$ and over $n\geq
-k$ if $k< 0$. If $k \geq 0$,
{\allowdisplaybreaks\begin{align*}
\sum_{n\geq 0}\frac{(a;q)_{n+k}(a;q)_n}
{(q;q)_{n+k}(q;q)_n}&\left(\frac{q}{a^2}\right)^{n}\\
&=\frac{(a;q)_k}{(q;q)_k}
\sum_{n\geq 0}\frac{(aq^k;q)_{n}(a;q)_n}
{(q^{k+1};q)_{n}(q;q)_n}\left(\frac{q}{a^2}\right)^{n}\\
&=\frac{(a;q)_k}{(q;q)_k}
\frac{(q/a,q^{k+1}/a;q)_{\infty}}{(q^{k+1},q/a^2;q)_{\infty}}\\
&=
\frac{(q/a,q/a;q)_{\infty}}{(q,q/a^2;q)_{\infty}}
\frac{(a;q)_k}{(q/a;q)_k},
\end{align*}}
by the $q$-Gauss sum \eqref{qgs} above (replace $a$ with $q^k a$,
$b$ with $a$, $c$ with $q^{k+1}$). A similar argument works when
$k<0$.
\end{proofof}

A second general double summation identity is contained in the
following theorem.

\begin{theorem}\label{t3}
Let $g(k)$ be any function such that both series in \eqref{t3eq1}
converge. Then
\begin{multline}\label{t3eq1}
\sum_{m\geq n\geq 0}\frac{(a;q)_m(b;q)_n(c/b;q)_{m-n}}{(c;q)_m(q;q)_n(a;q)_{m-n}}\left(\frac{c}{ab}\right)^{n}g(m-n)\\
=\frac{(c/a,c/b;q)_{\infty}}{(c,c/ab;q)_{\infty}}
\sum_{k=0}^{\infty}g(k).
\end{multline}
\end{theorem}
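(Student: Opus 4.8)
\textbf{Proof proposal for Theorem \ref{t3}.} The plan is to imitate the proof of Theorem \ref{t2}: reindex the double sum by the difference $m-n$, peel off the $g$-factor, and recognize the remaining inner sum as an instance of the $q$-Gauss sum \eqref{qgs}. Concretely, in \eqref{t3eq1} I would set $m-n=k$, i.e.\ $m=n+k$. Since the summation range is $m\geq n\geq 0$, this forces $k\geq 0$ and $n\geq 0$, so — in contrast to the bilateral situation in Theorem \ref{t2} — no separate treatment of negative index is needed. After the substitution the left side becomes
\[
\sum_{k=0}^{\infty}\frac{(c/b;q)_k}{(a;q)_k}\,g(k)\sum_{n\geq 0}\frac{(a;q)_{n+k}(b;q)_n}{(c;q)_{n+k}(q;q)_n}\left(\frac{c}{ab}\right)^{n}.
\]

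Next I would split the shifted Pochhammer symbols via $(a;q)_{n+k}=(a;q)_k(aq^k;q)_n$ and $(c;q)_{n+k}=(c;q)_k(cq^k;q)_n$, so the inner sum equals
\[
\frac{(a;q)_k}{(c;q)_k}\sum_{n\geq 0}\frac{(aq^k,b;q)_n}{(cq^k,q;q)_n}\left(\frac{c}{ab}\right)^{n}.
\]
The key observation is that the argument of the series is exactly right for \eqref{qgs} with $a\mapsto aq^k$, $b\mapsto b$, $c\mapsto cq^k$, since $cq^k/(aq^k\cdot b)=c/(ab)$. Applying the $q$-Gauss sum gives
\[
\frac{(a;q)_k}{(c;q)_k}\cdot\frac{(c/a,\,cq^k/b;q)_{\infty}}{(cq^k,\,c/ab;q)_{\infty}}.
\]

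Finally I would use $(cq^k;q)_{\infty}=(c;q)_{\infty}/(c;q)_k$ and $(cq^k/b;q)_{\infty}=(c/b;q)_{\infty}/(c/b;q)_k$ to simplify the inner sum to
\[
\frac{(a;q)_k}{(c/b;q)_k}\cdot\frac{(c/a,c/b;q)_{\infty}}{(c,c/ab;q)_{\infty}},
\]
whereupon the factor $(a;q)_k/(c/b;q)_k$ cancels against the $(c/b;q)_k/(a;q)_k$ left outside, leaving $\frac{(c/a,c/b;q)_{\infty}}{(c,c/ab;q)_{\infty}}\sum_{k\geq 0}g(k)$, as claimed. There is no genuine obstacle here beyond bookkeeping: the only points that need care are the matching of parameters so that the $q$-Gauss argument $c/(ab)$ is invariant under the shift by $q^k$, and the assumption that both series converge, which is exactly what lets us interchange the order of summation (sum over $n$ for fixed $k$, then over $k$) that underlies the reindexing.
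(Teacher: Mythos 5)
Your proposal is correct and follows essentially the same route as the paper's own proof: reindex by $k=m-n$, factor out $(c/b;q)_k/(a;q)_k\,g(k)$, shift the Pochhammer symbols, and evaluate the inner sum by the $q$-Gauss sum \eqref{qgs} with $a\mapsto aq^k$, $c\mapsto cq^k$. The cancellation of $(a;q)_k/(c/b;q)_k$ against the outer factor is exactly how the paper concludes as well.
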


\begin{proof}
Set $m-n=k$ or $m=n+k$, so that the left side becomes
\[
\sum_{k=0}^{\infty}g(k)\frac{(c/b;q)_k}{(a;q)_k}\sum_{n\geq 0}\frac{(a;q)_{n+k}(b;q)_n}
{(c;q)_{n+k}(q;q)_n}\left(\frac{c}{ab}\right)^{n},
\]
and {\allowdisplaybreaks\begin{align*} \sum_{n\geq
0}\frac{(a;q)_{n+k}(b;q)_n}
{(c;q)_{n+k}(q;q)_n}\left(\frac{c}{ab}\right)^{n}
&=\frac{(a;q)_k}{(c;q)_k} \sum_{n\geq 0}\frac{(aq^k;q)_{n}(b;q)_n}
{(cq^{k};q)_{n}(q;q)_n}\left(\frac{c}{ab}\right)^{n}\\
&=\frac{(a;q)_k}{(c;q)_k}
\frac{(c/a,cq^{k}/b;q)_{\infty}}{(cq^{k},c/ab;q)_{\infty}}\\
&= \frac{(c/a,c/b;q)_{\infty}}{(c,c/ab;q)_{\infty}}
\frac{(a;q)_k}{(c/b;q)_k},
\end{align*}}
by the $q$-Gauss sum \eqref{qgs} above (replace $a$ with $aq^k$ and
$c$ with $cq^{k}$).
\end{proof}
Remark: There is obviously some overlap between Theorems \ref{t2} and \ref{t3}, but neither is contained in the other.

\subsection{Multi-sums and the Main Theorems}
By multi-sums we mean here nested multiple sums of arbitrary depth. See  Andrews' \cite{A74} analytic version of the Andrews-Gordon identities at \eqref{AGids}
in the introduction for an example, and also the references mentioned there for further examples.
The constructions in the present paper may be iterated to produce multi-sums of a somewhat similar nature. We next prove Theorem \ref{multsumt}.

\begin{proofof}{Theorem \ref{multsumt}}
Rewrite \eqref{t3eq1} (after replacing $a$ with $a_1$, $b$ with $b_1$ and $c$ with $c_1$, $m$ with $m_1$, $n$ and $k$ with $m_2$, and finally replacing $m_2$ on the left side with $m_1-m_2$) as
\begin{multline}\label{t3eq11}
\sum_{m_1\geq m_2\geq 0}\frac{(a_1;q)_{m_1}(c_1/b_1;q)_{m_2}(b_1;q)_{m_1-m_2}}{(c_1;q)_{m_1}(a_1;q)_{m_2}(q;q)_{m_1-m_2}}\left(\frac{c_1}{a_1b_1}\right)^{m_1-m_2}g(m_2)\\
=\frac{(c_1/a_1,c_1/b_1;q)_{\infty}}{(c_1,c_1/a_1b_1;q)_{\infty}}.
\sum_{m_2=0}^{\infty}g(m_2).
\end{multline}
This is the $k=1$ case of Theorem \ref{multsumt}. The $k=2$ case easily follows upon setting
\[
g(m_2) = \sum_{m_3=0}^{m_2}\frac{(a_2;q)_{m_2}(c_2/b_2;q)_{m_3}(b_2;q)_{m_2-m_3}}
{(c_2;q)_{m_2}(a_2;q)_{m_3}(q;q)_{m_2-m_3}}
\left(\frac{c_2}{a_2b_2}\right)^{m_2-m_3}g(m_3),
\]
and using \eqref{t3eq11} to sum the resulting right side. This process can be repeated to arbitrary depth, giving the theorem.
\end{proofof}

It is natural to ask if Theorem \ref{t2} can be similarly iterated. The answer is ``yes", once it is noticed that the sum on the left side of \eqref{t2eq1}
may be extended to $\sum_{m=-\infty}^{\infty}$ for free, since $1/(q;q)_m=0$ for $m<0$. However, a more general identity may be derived by modifying the proof of the previous theorem.
\begin{proofof}{Theorem \ref{multsumtt}}
The proof follows the proof of Theorem \ref{multsumt}, except at the last stage we instead set
\[
g(m_{k})=\sum_{m_{k+1}=-\infty}^{m_k}\frac{(a;q)_{m_k}(q/a;q)_{m_{k+1}}(a;q)_{m_k-m_{k+1}}}
{(q;q)_{m_k}(a;q)_{m_{k+1}}(q;q)_{m_k-m_{k+1}}}
\left(\frac{q}{a^2}\right)^{m_k-m_{k+1}}g(m_{k+1}),
\]
and then use \eqref{t2eq1} to sum the final right side.
\end{proofof}

Any sequence $\{g(j)\}_{j=0}^{\infty}$ which is summable to an infinite product may now be substituted in \eqref{multc}, to give a multi-sum equals infinite product identity. This includes all the sequences from any of the known basic hypergeometric summation formulae, and in particular any of the 130 identities on the Slater list. Likewise, any sequence $\{g(j)\}_{j=-\infty}^{\infty}$ which is summable to an infinite product may now be substituted in \eqref{multcc}, to also give a multi-sum equals infinite product identity.
\begin{corollary}
Let $|q|<1$, $k\geq 1$ be a positive integer,  $a_1, \dots , a_k$,  $b_1, \dots , b_k$,  $c_1, \dots , c_k$ be complex numbers with each $|c_j/(a_jb_j)|<1$. Let the sum on the left be over all integer  $k+1$-tuples $\vec{m}=(m_1,m_2,\dots, m_{k+1})$ satisfying $m_1\geq m_2\geq \dots \geq m_k\geq m_{k+1}\geq 0$. Then
{\allowdisplaybreaks
\begin{multline}\label{multcr}
\sum_{\vec{m}}\prod_{j=1}^{k}\frac{(a_j;q)_{m_j}(c_j/b_j;q)_{m_{j+1}}(b_j;q)_{m_j-m_{j+1}}}
{(c_j;q)_{m_j}(a_j;q)_{m_{j+1}}(q;q)_{m_j-m_{j+1}}}\left( \frac{c_j}{a_jb_j}\right)^{m_j-m_{j+1}} \frac{q^{m_{k+1}^2}}{(q;q)_{m_{k+1}}}\\
=\frac{1}{(q,q^4;q^5)_{\infty}}\prod_{j=1}^{k}\frac{(c_j/a_j,c_j/b_j;q)_{\infty}}{(c_j,c_j/a_jb_j;q)_{\infty}}.
\end{multline}
}
\end{corollary}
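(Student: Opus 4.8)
The plan is to read off the corollary as the specialization of Theorem~\ref{multsumt} obtained by feeding in the sequence attached to the first Rogers--Ramanujan identity. Concretely, I would put
\[
g(j) = \frac{q^{j^{2}}}{(q;q)_{j}}, \qquad j = 0,1,2,\dots,
\]
into \eqref{multc}. With this choice the general term of the left-hand side of \eqref{multc} coincides termwise with the general term of \eqref{multcr} (the extra factor $g(m_{k+1}) = q^{m_{k+1}^{2}}/(q;q)_{m_{k+1}}$ is exactly the last factor written in \eqref{multcr}), so the two left-hand sides are literally the same sum. On the right-hand side of \eqref{multc} one obtains $\prod_{j=1}^{k}\frac{(c_{j}/a_{j},c_{j}/b_{j};q)_{\infty}}{(c_{j},c_{j}/a_{j}b_{j};q)_{\infty}}$ multiplied by $\sum_{j=0}^{\infty} q^{j^{2}}/(q;q)_{j}$, and by the first Rogers--Ramanujan identity $\sum_{j=0}^{\infty} q^{j^{2}}/(q;q)_{j} = 1/(q,q^{4};q^{5})_{\infty}$ this is precisely the right-hand side of \eqref{multcr}.

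Before Theorem~\ref{multsumt} may be invoked, I would verify its hypothesis that both series converge for this $g$. The single series on the right is the Rogers--Ramanujan series $\sum_{j\ge 0} q^{j^{2}}/(q;q)_{j}$, which converges absolutely for every $|q|<1$, so that side is immediate. For the multi-sum I would change summation variables to $d_{j} := m_{j}-m_{j+1}\ge 0$ for $1\le j\le k$ together with $m_{k+1}\ge 0$, so that $m_{j} = m_{k+1}+d_{j}+d_{j+1}+\cdots+d_{k}$ and all indices range freely over the nonnegative integers. Each of the Pochhammer ratios $(a_{j};q)_{m_{j}}/(c_{j};q)_{m_{j}}$, $(c_{j}/b_{j};q)_{m_{j+1}}/(a_{j};q)_{m_{j+1}}$ and $(b_{j};q)_{d_{j}}/(q;q)_{d_{j}}$ converges to a finite limit as its index grows and (assuming, as one must, that no $c_{j}$ or $a_{j}$ lies in $q^{-\mathbb{Z}_{\ge 0}}$) is therefore bounded in absolute value; hence the general term is $O\!\bigl(\prod_{j=1}^{k}\lvert c_{j}/a_{j}b_{j}\rvert^{d_{j}}\cdot q^{m_{k+1}^{2}}/\lvert (q;q)_{m_{k+1}}\rvert\bigr)$, and summing this bound over $d_{1},\dots,d_{k}\ge 0$ and $m_{k+1}\ge 0$ gives a product of $k$ convergent geometric series (using $\lvert c_{j}/a_{j}b_{j}\rvert<1$) times a convergent Rogers--Ramanujan-type series. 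So the multi-sum converges absolutely.

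With both convergence conditions checked, Theorem~\ref{multsumt} applies directly and delivers \eqref{multcr}. I do not anticipate any real difficulty: the content of the corollary is entirely carried by the main theorem together with a classical summation, and the only point requiring any care is the convergence bookkeeping for the nested multi-sum, which the substitution $d_{j}=m_{j}-m_{j+1}$ reduces to a product of geometric series.
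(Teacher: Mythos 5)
Your proposal is correct and is exactly the paper's argument: substitute $g(j)=q^{j^2}/(q;q)_j$ into Theorem~\ref{multsumt} and sum the resulting single series by the first Rogers--Ramanujan identity. The convergence bookkeeping you add (via the substitution $d_j=m_j-m_{j+1}$) is a sensible extra check that the paper leaves implicit.
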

\begin{proof}
Set
\[
g(j) = \frac{q^{j^2}}{(q;q)_j}
\]
in Theorem \ref{multsumt}.
\end{proof}

\begin{corollary}
Let $k \geq 1$ be an integer. Assume that $\{g(j)\}$ is a sequence such that both sides following converge, and let the sum on the left be over all integer  $k+1$-tuples $\vec{m}=(m_1,m_2,\dots, m_{k+1})$ satisfying $m_1\geq m_2\geq \dots \geq m_k\geq m_{k+1}\geq 0$. Then
\begin{multline}\label{mseq1}
\sum_{\vec{m}}\frac{q^{m_1(m_1-m_2)+m_2(m_2-m_3)+\dots +m_k(m_k-m_{k+1})+m_{k+1}-m_1}}
{(c_1;q)_{m_1}(q;q)_{m_1-m_{2}} (c_2;q)_{m_2}(q;q)_{m_2-m_{3}}  \dots (c_k;q)_{m_k}(q;q)_{m_k-m_{k+1}} }\\
\times
g(m_{k+1})\prod_{j=1}^kc_j^{m_j-m_{j+1}}
=\frac{1}{(c_1,c_2,\dots,c_k;q)_{\infty}}\sum_{j=0}^{\infty}g(j).
\end{multline}
\end{corollary}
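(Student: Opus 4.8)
The plan is to obtain \eqref{mseq1} as a specialization of Theorem \ref{multsumt}, choosing the parameters $a_j, b_j$ so that the general product in \eqref{multc} collapses to the simpler rational-times-power expression appearing in \eqref{mseq1}. The natural choice is to push $a_j \to \infty$ and $b_j \to \infty$ while keeping $c_j$ fixed, since in that limit $(a_j;q)_{m_j}/(a_j;q)_{m_{j+1}} \to (-a_j)^{m_j-m_{j+1}} q^{\binom{m_j}{2}-\binom{m_{j+1}}{2}}$ and similarly $(b_j;q)_{m_j-m_{j+1}} \to (-b_j)^{m_j-m_{j+1}} q^{\binom{m_j-m_{j+1}}{2}}$, while $(c_j/b_j;q)_{m_{j+1}} \to 1$; meanwhile the factor $(c_j/(a_jb_j))^{m_j-m_{j+1}}$ supplies exactly the $(-a_j)^{-1}(-b_j)^{-1}$ needed to cancel the stray $a_j,b_j$. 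So the $a_j,b_j$ disappear in the limit and one is left with $c_j^{m_j-m_{j+1}}$ times an explicit power of $q$. On the product side, $(c_j/a_j;q)_\infty \to 1$, $(c_j/b_j;q)_\infty \to 1$, $(c_j/(a_jb_j);q)_\infty \to 1$, leaving $\prod_j 1/(c_j;q)_\infty$ times $\sum_j g(j)$, which is precisely the right-hand side of \eqref{mseq1}.

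First I would carry out the limit at the level of a single factor $j$ in the product on the left of \eqref{multc}, collecting the $q$-exponent contributed by $q^{\binom{m_j}{2}-\binom{m_{j+1}}{2}+\binom{m_j-m_{j+1}}{2}}$ and checking that, after telescoping the $\binom{m_j}{2}-\binom{m_{j+1}}{2}$ terms across $j=1,\dots,k$ (with $m_{k+1}$ the innermost index and $m_1$ the outermost), the total $q$-power matches the exponent $m_1(m_1-m_2)+m_2(m_2-m_3)+\dots+m_k(m_k-m_{k+1})+m_{k+1}-m_1$ displayed in \eqref{mseq1}. A short computation: $\binom{m_j}{2}-\binom{m_{j+1}}{2}+\binom{m_j-m_{j+1}}{2} = (m_j-m_{j+1})m_j$ after simplification (using $\binom{a}{2}-\binom{b}{2}+\binom{a-b}{2} = a^2 - ab = a(a-b)$), which already gives the $\sum_j m_j(m_j-m_{j+1})$ part directly, with no telescoping needed; the residual linear terms $m_{k+1}-m_1$ must then be accounted for, and I would trace these to a slight normalization in how the limit of $(a;q)_{m}/(a;q)_{n}$ is taken (the $\binom{m_j}{2}$ versus $\binom{m_j}{2}$ bookkeeping of the leading $q$-power in $(-aq^{?})$), i.e. confirming the precise form of $\lim_{a\to\infty}(a;q)_m (-1/a)^m = q^{\binom{m}{2}}$ and similarly for the ratio, which forces the linear correction. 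The denominators are immediate: $(c_j;q)_{m_j}$ in \eqref{multc} becomes $(c_j;q)_{m_j}$, $(a_j;q)_{m_{j+1}}$ cancels against part of the numerator in the limit, $(q;q)_{m_j-m_{j+1}}$ survives, and at the innermost level there is no extra $(q;q)$ factor because $g$ is arbitrary — matching \eqref{mseq1} exactly where only $(c_j;q)_{m_j}(q;q)_{m_j-m_{j+1}}$ appear.

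The main obstacle I anticipate is justifying the interchange of the limit $a_j,b_j\to\infty$ with the (in general infinite) multi-sum, i.e. the convergence/dominated-convergence bookkeeping; the statement sidesteps this by hypothesizing outright that ``both sides following converge,'' so rigorously one should either argue termwise (each term of the multi-sum converges to the claimed term, and under the stated convergence hypothesis the limit passes through) or, cleanest of all, simply re-derive \eqref{mseq1} directly by iterating Theorem \ref{t3} in the same way Theorem \ref{multsumt} was proved, but starting from the already-specialized $k=1$ identity obtained by taking $a_1,b_1\to\infty$ in \eqref{t3eq1}. That is, I would establish the $k=1$ case of \eqref{mseq1} as a limiting case of Theorem \ref{t3} (a single $q$-Gauss-sum computation), and then run the exact induction from the proof of Theorem \ref{multsumt}: set $g(m_k)$ equal to the depth-one operator applied to $g(m_{k+1})$ and resum using the $k=1$ case. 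Either route is routine once the single-factor limit is pinned down; the only genuine care needed is the $q$-exponent arithmetic and the linear term $m_{k+1}-m_1$, which is the kind of thing easy to get off by one.
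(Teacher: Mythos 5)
Your proposal is correct and is essentially the paper's own proof, which simply says ``Let each $a_j$, $b_j\to\infty$ in \eqref{multc}'' (and offers, as you also do, the alternative of iterating the already-specialized depth-one identity). One small note: the exact identity is $\binom{a}{2}-\binom{b}{2}+\binom{a-b}{2}=(a-b)(a-1)$, not $a(a-b)$, and the extra $-(a-b)$ telescopes over $j=1,\dots,k$ to give precisely the linear term $m_{k+1}-m_1$, so no further normalization of the limit is needed.
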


\begin{proof}
Let each $a_j$, $b_j\to \infty$ in \eqref{multc}. (Alternatively, apply an argument similar to that used in the proof of Theorem \ref{multsumt} to iterate \eqref{eq2}, after first defining $g(j)=0$ for $j<0$).
\end{proof}

Corollary  \ref{multeuler} follows as a special case.
\begin{proofof}{ Corollary \ref{multeuler}}
For \eqref{mseq10}, let each $c_j=q$ and set $g(j)=\delta_{0,j}$ in the corollary above. For \eqref{mseq102}, replace $q$ with $q^k$, let $c_j=q^j$ and again set $g(j)=\delta_{0,j}$ in the corollary above.
\end{proofof}

\begin{corollary}
Let $k \geq 1$ be an integer. Assume that $\{g(j)\}$ is a sequence such that both sides following converge, and let the sum on the left be over all integer  $k+1$-tuples $\vec{m}=(m_1,m_2,\dots, m_{k+1})$ satisfying $m_1\geq m_2\geq \dots \geq m_k\geq 0$, $m_k  \geq m_{k+1}>-\infty$. Then
\begin{multline}\label{mseq111}
\!\sum_{\vec{m}}\frac{q^{m_1(m_1-m_2)+m_2(m_2-m_3)+\dots +m_k(m_k-m_{k+1})+m_{k}-m_1}}
{(c_1;q)_{m_1}(q;q)_{m_1-m_{2}} (c_2;q)_{m_2}(q;q)_{m_2-m_{3}}  \dots (c_{k-1};q)_{m_{k-1}}(q;q)_{m_{k-1}-m_{k}}}\\
\times
\frac{g(m_{k+1})\prod_{j=1}^{k-1}c_j^{m_j-m_{j+1}}}{(q;q)_{m_{k}}(q;q)_{m_{k}-m_{k+1}}}
=\frac{1}{(c_1,c_2,\dots,c_{k-1},q;q)_{\infty}}\sum_{j=-\infty}^{\infty}g(j).
\end{multline}
\end{corollary}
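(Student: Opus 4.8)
The plan is to deduce \eqref{mseq111} from Theorem \ref{multsumtt} by the same limiting device that passes from \eqref{multc} to \eqref{mseq1}. In \eqref{multcc} I would hold $c_1,\dots,c_{k-1}$ and the sequence $\{g(j)\}$ fixed while letting $a_j\to\infty$, $b_j\to\infty$ for each $j=1,\dots,k-1$ and, simultaneously, $a\to\infty$. The only analytic input needed is the elementary asymptotic $(x;q)_n\sim(-x)^{n}q^{n(n-1)/2}$ as $x\to\infty$, valid for every integer $n$ — one genuinely needs it for negative $n$, since the innermost index $m_{k+1}$ can be negative — together with $(x;q)_\infty\to1$ as $x\to\infty$.

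Under these limits the bookkeeping is routine, and I would only record the outcome. In each factor of \eqref{multcc} with $j\le k-1$ the powers of $a_j$ and $b_j$ cancel against those in $(c_j/a_jb_j)^{m_j-m_{j+1}}$, leaving $c_j^{\,m_j-m_{j+1}}$ times $q^{(m_j-m_{j+1})(m_j-1)}$ over the denominator $(c_j;q)_{m_j}(q;q)_{m_j-m_{j+1}}$, while $(c_j/b_j;q)_{m_{j+1}}\to1$; in the final factor the powers of $a$ cancel against $(q/a^2)^{m_k-m_{k+1}}$, leaving $q^{m_k(m_k-m_{k+1})}$ over $(q;q)_{m_k}(q;q)_{m_k-m_{k+1}}$, while $(q/a;q)_{m_{k+1}}\to1$. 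Since $\sum_{j=1}^{k-1}(m_j-m_{j+1})(m_j-1)$ telescopes to $\sum_{j=1}^{k-1}m_j(m_j-m_{j+1})-m_1+m_k$, the total power of $q$ is $m_1(m_1-m_2)+\cdots+m_k(m_k-m_{k+1})+m_k-m_1$, exactly as in \eqref{mseq111}, and the numerator collects $g(m_{k+1})\prod_{j=1}^{k-1}c_j^{\,m_j-m_{j+1}}$. On the right side $\frac{(c_j/a_j,c_j/b_j;q)_\infty}{(c_j,c_j/a_jb_j;q)_\infty}\to\frac1{(c_j;q)_\infty}$ for $j\le k-1$ and $\frac{(q/a,q/a;q)_\infty}{(q,q/a^2;q)_\infty}\to\frac1{(q;q)_\infty}$, producing the prefactor $1/(c_1,\dots,c_{k-1},q;q)_\infty$ in front of $\sum_{j=-\infty}^{\infty}g(j)$.

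The only point requiring care — and the main obstacle — is justifying the interchange of this iterated limit with the (infinite, and bilateral in the last index) multi-sum on the left of \eqref{multcc}; this is precisely the gap already present in the proof of \eqref{mseq1}. I would dispose of it by not passing to a limit inside a multi-sum at all, but instead using the iteration route flagged in that proof: take \eqref{mseq1} with $k$ replaced by $k-1$ and substitute into it
\[
g(m_k)\;=\;\sum_{m_k\ge m_{k+1}>-\infty}\frac{q^{\,m_k(m_k-m_{k+1})}}{(q;q)_{m_k}(q;q)_{m_k-m_{k+1}}}\,g(m_{k+1}),
\]
whose sum over $m_k\ge 0$ equals $\frac1{(q;q)_\infty}\sum_{j=-\infty}^{\infty}g(j)$ by the $a\to\infty$ case of Theorem \ref{t2} (equivalently, one iterates that limiting case in the manner of the proof of Theorem \ref{multsumtt}, with $k=1$ being that limiting case itself). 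Merging this single, already-established double-sum evaluation with \eqref{mseq1} reproduces the left side of \eqref{mseq111} after the exponent and Pochhammer bookkeeping sketched above, and the constraints on $\vec m$ are inherited automatically; no convergence issue then arises beyond the standing hypothesis that both sides of \eqref{mseq111} converge.
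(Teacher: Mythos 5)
Your proposal is correct and takes essentially the same route as the paper, whose entire proof is the one line ``let $a$ and each $a_j$, $b_j\to\infty$ in \eqref{multcc}''; your exponent and Pochhammer bookkeeping (including the telescoping to $\sum_j m_j(m_j-m_{j+1})+m_k-m_1$) checks out. The iteration argument you append to justify the limit interchange simply mirrors the parenthetical alternative the paper itself offers in the proof of \eqref{mseq1}, now with the bilateral innermost step supplied by Corollary \ref{tc1}, so nothing here diverges from the paper's approach.
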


\begin{proof}
Let $a$ and each $a_j$, $b_j\to \infty$ in \eqref{multcc}.
\end{proof}

Corollary  \ref{corbilat} follows as a special case.
\begin{proofof}{ Corollary \ref{corbilat}}
In the corollary above, replace $q$ with $q^k$, set each $c_j=q^j$ and set $g(j)=q^{pj^2/2}(-q^{p/2-i})^j$, and simplify.
\end{proofof}

\section{Some Applications}

We first consider a special case of Theorem \ref{t2} which has a number of interesting implications.

\begin{corollary}\label{tc1}
Let $g(k)$ be any function such that both series in \eqref{eq2} converge. Then
\begin{equation}\label{eq2}
\sum_{m, n\geq 0}\frac{q^{mn}g(m-n)}{(q;q)_m(q;q)_n}=\frac{1}
{(q;q)_{\infty}}\sum_{k=-\infty}^{\infty}g(k).
\end{equation}
\end{corollary}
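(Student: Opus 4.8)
The plan is to obtain \eqref{eq2} simply by letting $a\to\infty$ in Theorem \ref{t2}. The only fact needed is the elementary limit
\[
\lim_{a\to\infty}\frac{(a;q)_r}{(-a)^r}=q^{\binom{r}{2}},
\]
valid for every integer $r$ (for $r<0$ one uses $(a;q)_r=(a;q)_\infty/(aq^r;q)_\infty=1/(aq^{r};q)_{-r}$, and the same limit holds, with $\binom{r}{2}$ read as $r(r-1)/2$). First I would apply this to the general term of the left-hand side of \eqref{t2eq1}: the factor $(q/a;q)_{m-n}$ tends to $1$ in every case (including $m<n$, where $(q/a;q)_{m-n}=1/(q^{1-(n-m)}/a;q)_{n-m}\to1$), while the $a$-dependent part of the rest combines as
\[
\frac{(a;q)_m(a;q)_n}{(a;q)_{m-n}}\Big(\frac{q}{a^{2}}\Big)^{n}
\;\sim\;(-a)^{\,m+n-(m-n)-2n}\;q^{\,\binom{m}{2}+\binom{n}{2}-\binom{m-n}{2}+n}.
\]
The exponent of $-a$ is identically $0$, and a one-line simplification gives $\binom{m}{2}+\binom{n}{2}-\binom{m-n}{2}+n=mn$; hence each term of the left side of \eqref{t2eq1} tends to $q^{mn}g(m-n)/\big((q;q)_m(q;q)_n\big)$, which is exactly the general term of \eqref{eq2}.

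On the right-hand side of \eqref{t2eq1} we have $(q/a;q)_\infty\to1$ and $(q/a^{2};q)_\infty\to1$, so the prefactor $(q/a,q/a;q)_\infty/(q,q/a^{2};q)_\infty$ tends to $1/(q;q)_\infty$, while $\sum_k g(k)$ does not involve $a$. Passing to the limit on both sides of \eqref{t2eq1} then produces \eqref{eq2}.

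The one point that genuinely needs care — and the one I expect to be the main obstacle — is justifying the termwise passage to the limit inside the (double) sum on the left; since for $|a|$ large the ratios $(a;q)_r/(-a)^r$ stay bounded, a routine dominated-convergence argument based on the convergence hypotheses of the theorem settles this. Alternatively, one can bypass limits entirely: set $k=m-n$ and sum first over the remaining variable. The inner sum is $\sum_{n\geq\max(0,-k)}q^{n(n+k)}/\big((q;q)_{n+k}(q;q)_n\big)$, and this equals $1/(q;q)_\infty$ for every $k\in\mathbb{Z}$ — for $k\geq0$ factor $(q;q)_{n+k}=(q;q)_k\,(q^{k+1};q)_n$ and apply Cauchy's identity \eqref{ceq} with $z=q^{k}$, and the case $k<0$ reduces to $k>0$ after the shift $n\mapsto n-k$ — so the double sum collapses to $\frac{1}{(q;q)_\infty}\sum_{k=-\infty}^{\infty}g(k)$, which is \eqref{eq2}.
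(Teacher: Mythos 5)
Your proof is correct and follows the same route as the paper, which simply lets $a\to\infty$ in \eqref{t2eq1} and leaves the "simple algebra" (your verification that the powers of $-a$ cancel and that $\binom{m}{2}+\binom{n}{2}-\binom{m-n}{2}+n=mn$) to the reader. The alternative direct argument you sketch via Cauchy's identity \eqref{ceq} is a nice self-contained bonus, but the main line of your argument is exactly the paper's.
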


\begin{proof}
Let $a\to \infty$ in \eqref{t2eq1}, and \eqref{eq2} follows after some simple algebra.
\end{proof}

We first give another demonstration that the Jacobi triple product identity follows from the following special case of the $q$-binomial theorem:
\begin{equation}\label{cauchy}
\sum_{n=0}^{\infty}\frac{q^{n(n+1)/2}x^n}{(q;q)_n}=(-xq;q)_{\infty}.
\end{equation}
\begin{corollary}\label{c1b}
Let $z$ be a non-zero complex number. If $|q|<1$, then
\begin{equation}\label{jttid}
\sum_{n=
-\infty}^{\infty}q^{n^2}z^n=(-qz,-q/z,q^2;q^2)_{\infty}.
\end{equation}
\end{corollary}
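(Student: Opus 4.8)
The plan is to derive the Jacobi triple product identity \eqref{jttid} by a clever specialization of Corollary \ref{tc1}, namely equation \eqref{eq2}, together with the Cauchy-type sum \eqref{cauchy}. The key observation is that \eqref{eq2} takes an arbitrary sequence $\{g(k)\}_{k=-\infty}^{\infty}$ and outputs $\frac{1}{(q;q)_\infty}\sum_{k}g(k)$ on one side and a double sum weighted by $q^{mn}/((q;q)_m(q;q)_n)$ on the other. So the strategy is: choose $g$ so that (i) the double sum on the left of \eqref{eq2} can be evaluated in closed form using \eqref{cauchy}, and (ii) the bilateral sum $\sum_k g(k)$ on the right is exactly the theta series $\sum_n q^{n^2}z^n$ we want to identify.

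First I would try $g(k) = q^{k^2} z^k$ (possibly with $q$ replaced by $q^2$, or with an extra shift, to make the exponents line up). With this choice the right side of \eqref{eq2} becomes $\frac{1}{(q;q)_\infty}\sum_{k=-\infty}^{\infty} q^{k^2}z^k$ — precisely (a $q$-product times) the left side of \eqref{jttid}. The work is then all on the left side: substituting $g(m-n) = q^{(m-n)^2}z^{m-n}$ gives
\[
\sum_{m,n\geq 0}\frac{q^{mn+(m-n)^2}z^{m-n}}{(q;q)_m(q;q)_n}
=\sum_{m,n\geq 0}\frac{q^{m^2-mn+n^2}z^{m-n}}{(q;q)_m(q;q)_n},
\]
which is exactly the left side of \eqref{eq1}. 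So really I need to evaluate this double sum as $\frac{1}{(q;q)_\infty}(-qz,-q/z,q^2;q^2)_\infty$, or equivalently show the double sum factors. The natural move is to complete the exponent: write $m^2-mn+n^2$ in a form that separates into a function of $m$ plus a function of $n$ after absorbing the cross term, but since $q^{mn}$ already sits there, instead I would sum over one variable at a time. Fix $m-n=k$ and sum over the free index, or — more promisingly — recognize $q^{m^2-mn+n^2} = q^{\binom{m}{2}+\binom{n}{2}+\text{linear}}\cdot q^{-mn+\dots}$; alternatively, after the substitution $m\to m$, $n\to n$, split $m^2-mn+n^2 = \binom{m}{2} + \binom{n}{2} + \frac{m+n}{?}\dots$ Cleaner: it may be easiest to expand $q^{-mn}$ is not available, so instead I would directly apply \eqref{cauchy} twice by first summing $\sum_m q^{m(m-?)}z^m/(q;q)_m$ treating $n$ as a parameter. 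Concretely, group the $z^{m-n}$ and write the double sum as $\sum_n \frac{z^{-n}}{(q;q)_n}\sum_m \frac{q^{m^2-mn}(qz)^? \dots}{(q;q)_m}$, and use \eqref{cauchy} on the inner sum after shifting so the quadratic exponent reads $m(m+1)/2$; iterate on $n$.

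The main obstacle I anticipate is getting the inner sum into the exact shape $\sum_m q^{m(m+1)/2}x^m/(q;q)_m$ required by \eqref{cauchy}: the exponent $m^2-mn$ is $2\binom{m}{2}+m-mn = 2\binom{m}{2}+m(1-n)$, so one needs $q$ replaced by $q^2$ inside \eqref{cauchy} and a careful bookkeeping of the $z$, $q$, and $q^{-n}$ factors, followed by a second application (or a geometric/Euler-type summation) over $n$ to collapse everything to $(-qz,-q/z,q^2;q^2)_\infty$. Keeping track of which powers of $q$ go with $m$ versus $n$ versus the cross term, and making sure convergence holds (which is covered by the hypothesis $|q|<1$ and $z\neq 0$ since the theta series converges), is the delicate part; once the inner sum is evaluated as an infinite product in the variable $q^{-n}$ or similar, the outer sum should telescope via \eqref{cauchy} again. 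Finally I would compare with the right side of \eqref{eq2} to read off \eqref{jttid}.
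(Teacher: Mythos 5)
There is a genuine gap: you never isolate the one idea that makes this proof work, namely choosing $g$ so that the cross term cancels and the double sum \emph{factors}. The paper sets $g(i)=q^{i^2/2}z^i$ in \eqref{eq2}, so that $q^{mn}g(m-n)=q^{mn+(m-n)^2/2}z^{m-n}=q^{(m^2+n^2)/2}z^{m-n}$: the $mn$ term disappears, the left side splits as
\[
\Bigl(\sum_{m\geq0}\tfrac{q^{m^2/2}z^m}{(q;q)_m}\Bigr)\Bigl(\sum_{n\geq0}\tfrac{q^{n^2/2}z^{-n}}{(q;q)_n}\Bigr),
\]
and each factor is evaluated at once by \eqref{cauchy} (with $x=z/q^{1/2}$ and $x=1/(zq^{1/2})$); replacing $q$ by $q^2$ then gives \eqref{jttid}. (Equivalently one can run \eqref{eq2} in base $q^2$ with $g(k)=q^{k^2}z^k$, since $q^{2mn+(m-n)^2}=q^{m^2+n^2}$ — this is the content of your parenthetical ``possibly with $q$ replaced by $q^2$,'' but you abandon it immediately.)

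What you actually execute is the substitution $g(k)=q^{k^2}z^k$ into \eqref{eq2} in base $q$, which produces $\sum_{m,n}q^{m^2-mn+n^2}z^{m-n}/((q;q)_m(q;q)_n)$. This does not factor, and your plan to evaluate it by applying \eqref{cauchy} one variable at a time fails: for fixed $n$ the inner sum has exponent $2\binom{m}{2}+m(1-n)$, i.e.\ quadratic in base $q^2$, but the denominator is $(q;q)_m$, not $(q^2;q^2)_m$, so \eqref{cauchy} (even with $q\to q^2$) does not apply and the inner sum has no simple product form. Worse, evaluating that double sum as an infinite product is precisely Corollary \ref{c1}, which the paper proves \emph{using} the triple product identity, so the route as written is circular. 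The overall strategy (feed a theta-like $g$ into \eqref{eq2} and close with \eqref{cauchy}) is the right one, but without the halved exponent the proof does not go through.
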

\begin{proof}
In \eqref{eq2}, set \[
g(i)=q^{i^2/2}z^i,
\]
so that this identity becomes
\[
\sum_{m\geq0}\frac{q^{m^2/2}z^m}{(q;q)_m}\sum_{n\geq0}\frac{q^{n^2/2}z^{-n}}{(q;q)_n}=\frac{1}{(q;q)_{\infty}}
\sum_{k=-\infty}^{\infty}q^{k^2/2}z^k.
\]
Now apply \eqref{cauchy} to the two sums on the left side (with $x$ replaced with $z/q^{1/2}$ and $1/(zq^{1/2})$), replace $q$ with $q^2$, and \eqref{jttid} follows.
\end{proof}
Remark: Andrews \cite{A65} gave a different proof the Jacobi triple product identity follows from the $q$-binomial theorem. The identity at  \eqref{eq1} also now follows as a special case of Corollary \ref{tc1}.

\begin{corollary}\label{c1}
If $|q|<1$ and $z\not =0$, then
\begin{equation*}
\sum_{m, n\geq
0}\frac{q^{m^2-mn+n^2}z^{m-n}}{(q;q)_m(q;q)_n}=\frac{(-q/z,-qz;q^2)_{\infty}}{(q;q^2)_{\infty}}.
\end{equation*}
\end{corollary}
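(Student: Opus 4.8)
\textbf{Proof plan for Corollary \ref{c1}.}
The plan is to recognize the left side as an instance of \eqref{eq2} and then invoke the Jacobi triple product identity already established in Corollary \ref{c1b}. First I would rewrite the exponent using the elementary identity $m^2-mn+n^2 = mn + (m-n)^2$, so that
\[
\sum_{m,n\geq 0}\frac{q^{m^2-mn+n^2}z^{m-n}}{(q;q)_m(q;q)_n}
= \sum_{m,n\geq 0}\frac{q^{mn}\,q^{(m-n)^2}z^{m-n}}{(q;q)_m(q;q)_n}.
\]
This is exactly the left side of \eqref{eq2} with the choice $g(k) = q^{k^2}z^k$; since $|q|<1$ and $z\neq 0$, both the double sum and the bilateral sum $\sum_{k} q^{k^2}z^k$ converge, so Corollary \ref{tc1} applies.

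Next I would apply \eqref{eq2} to obtain
\[
\sum_{m,n\geq 0}\frac{q^{mn}\,q^{(m-n)^2}z^{m-n}}{(q;q)_m(q;q)_n}
= \frac{1}{(q;q)_\infty}\sum_{k=-\infty}^{\infty}q^{k^2}z^k,
\]
and then substitute the Jacobi triple product evaluation from \eqref{jttid}, namely $\sum_{k=-\infty}^{\infty}q^{k^2}z^k = (-qz,-q/z,q^2;q^2)_\infty$. This yields
\[
\sum_{m,n\geq 0}\frac{q^{m^2-mn+n^2}z^{m-n}}{(q;q)_m(q;q)_n}
= \frac{(-qz,-q/z,q^2;q^2)_\infty}{(q;q)_\infty}.
\]

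Finally I would simplify to match the stated right side. The only remaining ingredient is the standard factorization $(q;q)_\infty = (q;q^2)_\infty(q^2;q^2)_\infty$ (splitting the product over odd and even exponents), which gives $(q^2;q^2)_\infty/(q;q)_\infty = 1/(q;q^2)_\infty$ and hence
\[
\frac{(-qz,-q/z,q^2;q^2)_\infty}{(q;q)_\infty} = \frac{(-q/z,-qz;q^2)_\infty}{(q;q^2)_\infty},
\]
as claimed. There is no real obstacle here: every step is routine, and the mild point to be careful about is simply noting that the convergence hypotheses of Corollary \ref{tc1} are met for the chosen $g$ and recording the product-splitting identity correctly.
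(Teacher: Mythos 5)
Your proposal is correct and follows exactly the paper's route: set $g(i)=q^{i^2}z^i$ in \eqref{eq2}, apply the Jacobi triple product \eqref{jttid}, and simplify using $(q;q)_\infty=(q;q^2)_\infty(q^2;q^2)_\infty$. You have merely spelled out the exponent identity $m^2-mn+n^2=mn+(m-n)^2$ and the product splitting that the paper leaves implicit.
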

\begin{proof}
Set $g(i)=q^{i^2}z^i$ in \eqref{eq2} and use the Jacobi triple product identity \eqref{jttid} above.
\end{proof}
Remark: The case $z=1$ gives an identity proved by Andrews in \cite{A77}.  In the same paper \cite{A77}, this identity motivated Andrews to pose the question:
``For what positive definite quadratic forms $Q(m,n)$  is
\[
\sum_{m=0}^{\infty}\sum_{n=0}^{\infty}\frac{q^{Q(m,n)}}{(q;q)_m(q;q)_n}
\]
summable to an infinite product. He also remarks that ``The only non-diagonal forms I know of are $km^2
+kn^2-(2k-1)mn$ ($k$ positive integral) and $n^2+2m^2+2nm$.'' The result for this infinite family of $k$-values also follows easily from Corollary \ref{tc1}.

\begin{corollary}
If $|q|<1$ and $k\geq 1$ is integral, then
\begin{equation}\label{mnkqid}
\sum_{m, n\geq
0}\frac{q^{km^2-(2k-1)mn+kn^2}}{(q;q)_m(q;q)_n}=\frac{(-q^k,-q^k,q^{2k};q^{2k})_{\infty}}{(q;q)_{\infty}}.
\end{equation}
\end{corollary}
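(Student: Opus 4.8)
The plan is to recognize the quadratic form in the exponent as a shifted perfect square and then invoke Corollary \ref{tc1} directly. Observe that
\[
km^2-(2k-1)mn+kn^2 = mn + k(m-n)^2,
\]
since $k(m-n)^2 = km^2-2kmn+kn^2$ and adding $mn$ restores the coefficient $-(2k-1)$ on the cross term. This is exactly the shape $q^{mn}g(m-n)$ appearing on the left of \eqref{eq2}, with $g(i)=q^{ki^2}$.

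So the first step is to set $g(i)=q^{ki^2}$ in Corollary \ref{tc1}. Since $|q|<1$ and $k\geq 1$, the sequence $q^{ki^2}$ decays rapidly, so both the double sum and the bilateral sum converge and the hypothesis of the corollary is met. The left side becomes precisely
\[
\sum_{m,n\geq 0}\frac{q^{km^2-(2k-1)mn+kn^2}}{(q;q)_m(q;q)_n},
\]
and the right side becomes
\[
\frac{1}{(q;q)_\infty}\sum_{i=-\infty}^{\infty}q^{ki^2}.
\]

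The second (and final) step is to evaluate the theta sum $\sum_{i=-\infty}^\infty q^{ki^2}$ using the Jacobi triple product identity \eqref{jttid}. Taking $z=1$ and replacing $q$ by $q^{k}$ in \eqref{jttid} gives $\sum_{i=-\infty}^\infty q^{ki^2} = (-q^k,-q^k,q^{2k};q^{2k})_\infty$. Substituting this into the expression above yields the claimed identity. I do not expect any genuine obstacle here: the only things to verify are the algebraic identity $km^2-(2k-1)mn+kn^2=mn+k(m-n)^2$ and the convergence of both series for $|q|<1$, both of which are immediate; everything else is a direct specialization of results already established in the excerpt.
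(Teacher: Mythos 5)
Your proof is correct and is essentially identical to the paper's, which also sets $g(i)=q^{ki^2}$ in Corollary \ref{tc1} and evaluates the resulting bilateral theta sum via the Jacobi triple product \eqref{jttid}. The explicit verification that $km^2-(2k-1)mn+kn^2=mn+k(m-n)^2$ is a welcome detail the paper leaves implicit.
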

\begin{proof}
Set $g(i)=q^{ki^2}$ in \eqref{eq2} and use the Jacobi triple product identity \eqref{jttid} above.
\end{proof}
Remark: The above identity was also proved by Andrews in \cite{A81} (Equation (4.2)).

While identities of the form ``infinite double-sum = infinite product" are possibly not quite so interesting as ``infinite single sum = infinite product" identities of the Rogers-Ramanujan-Slater, they are of some interest, and do appear in the literature. There are no known single-sum identities in which the modulus in the infinite product  is 11, but there double-sum identities of this type, stated in \cite{A75} by Andrews. Another example was given by Andrews in \cite{A77},  where a double-sum  alternative to one of the mod 7 identities due to Rogers was given:
\begin{equation}\label{amod7id}
\sum_{m, n\geq
0}\frac{q^{2m^2+2mn+n^2}}{(q;q)_m(q;q)_n}=\frac{(q^3,q^4,q^{7};q^{7})_{\infty}}{(q;q)_{\infty}}.
\end{equation}

It is clear that Corollary \ref{tc1} will also give many other double series that may be expressed as infinite products.
\begin{corollary}\label{c33}
If $|q|<1$, and  $k\geq 1$ and $0\leq j<k$ are integers with $j+k$ even, then
\begin{equation}
\sum_{m, n\geq
0}\frac{q^{(km^2-(2k-2)mn+n^2+j(m-n))/2}(-1)^{m-n}}{(q;q)_m(q;q)_n}=\frac{(q^{(k-j)/2},q^{(k+j)/2},q^{k};q^{k})_{\infty}}{(q;q)_{\infty}}.
\end{equation}
\end{corollary}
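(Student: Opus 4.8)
The plan is to derive Corollary \ref{c33} directly from Corollary \ref{tc1} by choosing the sequence $\{g(k)\}$ so that the left side of \eqref{eq2} reproduces the desired double sum and the bilateral sum on the right side of \eqref{eq2} becomes a Jacobi triple product. Concretely, I would set
\[
g(i)=q^{((k-1)i^2+ji)/2}(-1)^i,
\]
so that on the left side of \eqref{eq2} the factor $q^{mn}g(m-n)$ contributes, in the exponent of $q$, the quadratic form $mn+\tfrac{1}{2}\bigl[(k-1)(m-n)^2+j(m-n)\bigr]$. Expanding $(m-n)^2=m^2-2mn+n^2$ and collecting terms gives $\tfrac12\bigl[(k-1)m^2-2(k-1)mn+(k-1)n^2+2mn+j(m-n)\bigr]=\tfrac12\bigl[(k-1)m^2-(2k-4)mn+(k-1)n^2+jm-jn\bigr]$; I would double-check this matches $\tfrac12\bigl[km^2-(2k-2)mn+n^2+j(m-n)\bigr]$ — and here I notice the stated exponent is \emph{not} symmetric in $m,n$, so in fact the correct choice is $g(i)=q^{((k-1)i^2+(j\pm 1)i)/2}(-1)^i$ or a shift thereof; the precise linear coefficient will be pinned down by matching the $m^2$, $n^2$, and $mn$ coefficients against $km^2-(2k-2)mn+n^2$, which forces $\tfrac{k-1}{2}$ for the $m^2$-coefficient after adding the $mn$ from $q^{mn}$, etc. The sign factor $(-1)^{m-n}$ in the statement is exactly $(-1)^m(-1)^{-n}$, which is precisely how a $(-1)^i$ in $g$ splits under $i=m-n$, so that is consistent.

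Next I would evaluate the right side. With $g(i)=q^{\alpha i^2}(-1)^i\beta^i$ for the appropriate $\alpha$ and monomial factor $\beta$ dictated by the matching above, the bilateral sum is
\[
\sum_{k=-\infty}^{\infty}g(k)=\sum_{k=-\infty}^{\infty}q^{\alpha k^2}(-\beta)^k,
\]
which by Corollary \ref{c1b} (the Jacobi triple product in the form \eqref{jttid}, with $q$ there replaced by $q^{\alpha}$ and $z$ there replaced by $-\beta$) equals $(q^{\alpha}\beta,q^{\alpha}/\beta,q^{2\alpha};q^{2\alpha})_{\infty}$. Reading off $\alpha=(k-1)/2$ (with $k$ here the modulus, so $2\alpha=k-1$... actually the stated modulus on the right is $k$, so one expects $2\alpha=k$, i.e.\ $\alpha=k/2$; this is another consistency check that will fix the exact exponents) and the linear coefficient so that $q^{\alpha}\beta=q^{(k-j)/2}$ and $q^{\alpha}/\beta=q^{(k+j)/2}$, giving $q^{2\alpha}=q^k$ and $\beta=q^{-j/2}$, matches the right side of the Corollary exactly. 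The parity hypothesis ``$j+k$ even'' is precisely what guarantees $(k\pm j)/2$ are integers, so the product makes sense; the hypothesis $0\le j<k$ ensures these exponents are positive so no reflection $(q^a;q^b)_\infty$ with $a\le 0$ is needed.

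The only real work is the bookkeeping in the exponent-matching step: determining $\alpha$ and the linear coefficient in $g$ from the requirement that $mn + (\text{quadratic part of }g(m-n))$ equals $\tfrac12(km^2-(2k-2)mn+n^2)$ together with the linear term $\tfrac{j}{2}(m-n)$. Since $g(m-n)$ contributes a form symmetric in $(m,n)$ up to sign while the target $km^2-(2k-2)mn+n^2$ is \emph{not} symmetric, the $q^{mn}$ prefactor is doing essential asymmetry-breaking, and I expect the $m^2$ and $n^2$ coefficients to force $\tfrac{k-1}{2}$ and $\tfrac{1}{2}$ respectively — which are unequal — so in fact the correct substitution is into the \emph{non-symmetrized} version; I would resolve this by first rewriting $km^2-(2k-2)mn+n^2 = \bigl(\tfrac{k-1}{1}\bigr)\!\cdots$ Hmm — rather, I would set $g(i)$ with quadratic coefficient chosen to absorb the \emph{difference}, using that $km^2-(2k-2)mn+n^2-2mn = km^2-2(k-1)mn+n^2-\ldots$; the cleanest route is to write the target as $\tfrac{k-1}{2}\bigl[(m-n)^2\bigr]\cdot(\text{something}) + mn\cdot(\text{something})$ and solve. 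This is a short linear-algebra exercise in three unknowns (coefficients of $m^2$, $mn$, $n^2$) with one free prefactor $q^{mn}$, and once solved the rest is immediate from Corollaries \ref{tc1} and \ref{c1b}. I anticipate this matching is the main obstacle only in the sense of being easy to get an off-by-sign or off-by-factor-of-two error; the structural content is entirely contained in the two cited corollaries.
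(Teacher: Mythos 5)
You are following exactly the paper's route --- substitute a quadratic-exponential sequence $g$ into Corollary \ref{tc1} and sum the resulting bilateral series with the triple product \eqref{jttid} --- and your analysis of the product side correctly forces $\alpha=k/2$ and $\beta=q^{\mp j/2}$, i.e.\ $g(i)=(-1)^iq^{(ki^2\pm ji)/2}$, which is precisely the substitution the paper makes. But your argument never closes: you defer the left-hand exponent matching as ``a short linear-algebra exercise,'' and that exercise in fact has \emph{no} solution for the statement as printed. Since $q^{mn}g(m-n)$ with $g(i)=q^{\alpha i^2+\cdots}$ always produces equal coefficients $\alpha$ on $m^2$ and on $n^2$ (the prefactor $q^{mn}$ only affects the cross term), no choice of $g$ can yield the asymmetric form $km^2-(2k-2)mn+n^2$ when $k>1$. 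The asymmetry you kept circling around is not something to be absorbed by cleverness; it is a typo in the corollary: the $n^2$ should read $kn^2$. (Convergence also forces this: $km^2-(2k-2)mn+n^2$ has discriminant $4k^2-12k+4>0$ for $k\geq 3$, e.g.\ $3m^2-4mn+n^2=(3m-n)(m-n)\to-\infty$ along $n=2m$, so the printed double sum diverges, whereas $km^2-(2k-2)mn+kn^2$ has discriminant $4-8k<0$ and is positive definite for all $k\geq 1$.) With that correction, the choice $g(i)=(-1)^iq^{(ki^2+ji)/2}$ gives exponent $mn+\tfrac12\bigl(k(m-n)^2+j(m-n)\bigr)=\tfrac12\bigl(km^2-(2k-2)mn+kn^2+j(m-n)\bigr)$, and the identity follows exactly as you describe. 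So: right method, and the correct $g$ is already implicit in your product-side computation, but you should have diagnosed the inconsistency and corrected the quadratic form rather than leaving the decisive matching step open.
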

\begin{proof}
Set $g(i)=(-1)^iq^{(ki^2+ji)/2}$ in Corollary \ref{tc1} and once again use the Jacobi triple product identity \eqref{jttid} to sum the right side.
\end{proof}

For example, setting $k=7$ and $j=1$ in  Corollary \ref{c33} gives a double-sum identity with the same product side as that of Andrews at \eqref{amod7id}:
\[
\sum_{m, n\geq
0}\frac{q^{(7m^2-12mn+7n^2+m-n)/2}(-1)^{m-n}}{(q;q)_m(q;q)_n}=\frac{(q^3,q^4,q^{7};q^{7})_{\infty}}{(q;q)_{\infty}}.
\]

Letting $g(i)$ be the $i$-th term in the series side of any Rogers-Ramanujan-Slater-type identity (including the 130 such identities on the Slater list) will also lead to a double  summation formula.
\begin{corollary}\label{c3}
If $|q|<1$ then
\begin{equation}\label{c3eq1}
\sum_{m\geq n\geq 0}\frac{(a;q)_m(a;q)_n(q/a;q)_{m-n}}
{(q;q)_m(q;q)_n(a,q;q)_{m-n}}\frac{q^{m^2-2mn+n^2+n}}{a^{2n}}
=\frac{(q/a,q/a;q)_{\infty}}
{(q/a^2,q;q)_{\infty}(q,q^4;q^5)_{\infty}}.
\end{equation}
\end{corollary}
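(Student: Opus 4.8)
The plan is to recognize Corollary \ref{c3} as nothing more than the $a$-general version of Theorem \ref{t2} with a specific, already-evaluated choice of the arbitrary sequence $\{g(k)\}$. Looking at the left side of \eqref{c3eq1}, the factor $q^{m^2-2mn+n^2+n}/a^{2n} = q^{(m-n)^2}\cdot q^n/a^{2n} = q^{(m-n)^2}\cdot (q/a^2)^n$, and the structural part $(a;q)_m(a;q)_n(q/a;q)_{m-n}/[(q;q)_m(q;q)_n(a;q)_{m-n}]$ matches precisely the kernel in \eqref{t2eq1}. So the left side of \eqref{c3eq1} is exactly the left side of \eqref{t2eq1} with $g(i)=q^{i^2}$, since the $(q;q)_{m-n}$ in the denominator of \eqref{c3eq1} is being absorbed into $g(m-n)$ rather than into the kernel.

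First I would write $g(i)=q^{i^2}/(q;q)_i$ for $i\geq 0$ and $g(i)=0$ for $i<0$ (the Pochhammer $(q;q)_i$ in the denominator of \eqref{c3eq1} forces the vanishing for negative index, exactly as with $1/(q;q)_m$ elsewhere in the paper), and substitute this into \eqref{t2eq1}. The left side then becomes precisely the double sum on the left of \eqref{c3eq1}. For the right side, Theorem \ref{t2} gives the prefactor $(q/a,q/a;q)_\infty/[(q,q/a^2;q)_\infty]$ times $\sum_{k=-\infty}^{\infty}g(k) = \sum_{k=0}^{\infty} q^{k^2}/(q;q)_k$. The remaining step is to evaluate that single sum: $\sum_{k\geq 0} q^{k^2}/(q;q)_k = 1/(q,q^4;q^5)_\infty$, which is the first Rogers--Ramanujan identity. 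Combining the two factors yields exactly the right side of \eqref{c3eq1}.

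There is essentially no obstacle here: the only nontrivial input is the first Rogers--Ramanujan identity, which is classical and freely available (and indeed is the $g(j)=q^{j^2}/(q;q)_j$ specialization that the paper already invokes in the Corollary preceding \eqref{multcr}). One small point to check is convergence of both sides so that Theorem \ref{t2} applies; for $|q|<1$ the series $\sum q^{k^2}/(q;q)_k$ converges absolutely, and the double sum converges for $a$ in a suitable range, so the hypothesis of Theorem \ref{t2} is met. Thus the proof is a one-line substitution followed by citing Rogers--Ramanujan.

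\begin{proof}
In \eqref{t2eq1}, set
\[
g(i)=\frac{q^{i^2}}{(q;q)_i}
\]
for $i\geq 0$, with $g(i)=0$ for $i<0$. Since $q^{m^2-2mn+n^2+n}/a^{2n}=q^{(m-n)^2}(q/a^2)^{n}$, the left side of \eqref{t2eq1} becomes the left side of \eqref{c3eq1}, while the right side becomes
\[
\frac{(q/a,q/a;q)_{\infty}}{(q,q/a^2;q)_{\infty}}\sum_{k=0}^{\infty}\frac{q^{k^2}}{(q;q)_k}
=\frac{(q/a,q/a;q)_{\infty}}{(q,q/a^2;q)_{\infty}}\cdot\frac{1}{(q,q^4;q^5)_{\infty}},
\]
by the first Rogers--Ramanujan identity. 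This is the right side of \eqref{c3eq1}.
\end{proof}
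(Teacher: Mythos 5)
Your proof is correct and is essentially identical to the paper's own argument: the paper likewise sets $g(i)=q^{i^2}/(q;q)_i$ for $i\geq 0$ and $g(i)=0$ for $i<0$ in \eqref{t2eq1} and then invokes the first Rogers--Ramanujan identity to evaluate $\sum_{k\geq 0}q^{k^2}/(q;q)_k$. Your additional verification that $q^{m^2-2mn+n^2+n}/a^{2n}=q^{(m-n)^2}(q/a^2)^n$ and the remark on convergence are just explicit spellings-out of steps the paper leaves implicit.
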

\begin{proof}
Set\[
g(i)=\frac{q^{i^2}}
{(q;q)_{i}}
\] for $i\geq 0$, and equal to 0 for $i<0$, in \eqref{t2eq1}, and use the first Rogers-Ramanujan identity:
\begin{equation}\label{rr1}
\sum_{k=0}^{\infty}\frac{q^{i^2}}
{(q;q)_{i}}
=\frac{ 1 } {
(q,q^4;q^5)_{\infty} }.
\end{equation}
\end{proof}

\emph{Any} (uni-lateral or bi-lateral) basic hypergeometric  summation formula may be used in \eqref{t2eq1} to produce a double-summation identity (simply let $g(k)$ be the $k$-th term in the basic hypergeometric  sum).
Indeed, it is not necessary that the sequence $\{g(i)\}$ be basic hypergeometric in nature. The following amusing result is also a consequence of Theorem \ref{t2}.
\begin{corollary}\label{c5}
If $|q/a^2|<1$, then
\begin{equation*}
\sum_{m> n\geq 0}\frac{(a;q)_m(a;q)_n(q/a;q)_{m-n}q^{n}}
{(q;q)_m(q;q)_n(a,q;q)_{m-n}a^{2n}(m-n)^2}
=\frac{\pi^2(q/a,q/a;q)_{\infty}}
{6(q/a^2,q;q)_{\infty}}.
\end{equation*}
\end{corollary}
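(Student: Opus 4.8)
The plan is to obtain the identity as the specialization of Theorem~\ref{t2} in which the free sequence $\{g(k)\}$ is taken to be (a one-sided truncation of) the sequence whose sum is the Basel constant. Concretely, I would set $g(k)=1/k^{2}$ for $k\geq 1$ and $g(k)=0$ for $k\leq 0$ in \eqref{t2eq1}.

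First I would verify that with this choice the left-hand side of \eqref{t2eq1} collapses to the double sum asserted above. Since $g(m-n)=0$ whenever $m\leq n$, only the terms with $m>n$ survive, and in those terms $g(m-n)=1/(m-n)^{2}$; writing $(q/a^{2})^{n}=q^{n}/a^{2n}$ then reproduces exactly the summand of the claimed double sum. Next, on the right-hand side of \eqref{t2eq1} the bilateral sum $\sum_{k=-\infty}^{\infty}g(k)$ reduces to the unilateral sum $\sum_{k=1}^{\infty}1/k^{2}$, which equals $\pi^{2}/6$ by Euler's evaluation of $\zeta(2)$. Substituting this value and using $(q,q/a^{2};q)_{\infty}=(q/a^{2},q;q)_{\infty}$ produces the right-hand side as stated.

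The one hypothesis of Theorem~\ref{t2} that needs checking is that both series involved converge. The series on the right is the convergent $\pi^{2}/6$. For the series on the left, the factor $(q/a^{2})^{n}$ together with the assumption $|q/a^{2}|<1$ controls the summation over $n$ (the quotients of $q$-Pochhammer symbols remain bounded as $m$ and $n$ grow, because $|q|<1$), while the factor $1/(m-n)^{2}$ forces absolute convergence of the inner summation over $k=m-n$. Thus Theorem~\ref{t2} applies under the stated condition, and there is essentially no obstacle beyond this routine convergence verification: the only real ``content'' is the observation that the arbitrary sequence in Theorem~\ref{t2} may be chosen to be $\{1/k^{2}\}_{k\geq 1}$, which pulls $\zeta(2)$ out of a purely $q$-series transformation.
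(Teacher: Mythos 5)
Your proposal is essentially the paper's own proof: the paper likewise defines $g(i)=1/i^2$ for $i>0$ and $0$ otherwise and invokes $\zeta(2)=\pi^2/6$, and in fact you correctly plug this into \eqref{t2eq1}, whereas the paper's proof miscites \eqref{eq2} (which has no parameter $a$ and so cannot produce the stated summand). One caveat: the substitution yields only $(a;q)_{m-n}$ in the denominator, not the printed $(a,q;q)_{m-n}$ --- the extra $(q;q)_{m-n}$ appears to be a typo in the corollary's statement carried over from Corollary \ref{c3} (where it comes from $g(i)=q^{i^2}/(q;q)_i$) --- so your assertion that the summand matches ``exactly'' is off by that factor, though the derivation itself is sound.
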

\begin{proof}
Define \[
g(i)=\begin{cases}
\frac{1}{i^2},&i>0,\\
0, & \text{otherwise}
\end{cases}\] in \eqref{eq2} and use the fact that $\zeta(2)=\pi^2/6$.
\end{proof}

As with Theorem \ref{t2}, Theorem \ref{t3} may also be employed in conjunction with existing
summation formulae to produce double summation identities. We give one example.

\begin{corollary}\label{t3c4}
Let $A$, $B$, $C$, $a$,  $c$ and $d$  be  such that  none of the
denominators below vanish, with $|q|, |c|, |C/AB|<1$ and . Then
\begin{multline}\label{t3c4eq1}
\sum_{m\geq n\geq 0}\!\!\!
\frac{\left(-c,q\sqrt{-c},-q\sqrt{-c},a,\frac{q}{a},c,-d,\frac{-q}{d},\frac{C}{B};q\right)_{m-n}(A;q)_m(B;q)_nc^{m-n}C^n}
{(\sqrt{-c},-\sqrt{-c},\frac{-cq}{a},-ac,-q,\frac{cq}{d},cd,q,A;q)_{m-n}(C;q)_m(q;q)_nA^{n}B^n}
\\
=\frac{(C/A,C/B,-c,-cq;q)_{\infty}(acd,acq/d,cdq/a,cq^2/ad;q^2)_{\infty}}
{(C/AB,C,cd,cq/d,-ac,-cq/a;q)_{\infty}}.
\end{multline}
\end{corollary}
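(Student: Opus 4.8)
The plan is to recognise \eqref{t3c4eq1} as the instance of Theorem~\ref{t3} obtained by renaming the structural parameters $a,b,c$ of \eqref{t3eq1} as $A,B,C$ and choosing the free sequence $g$ to be a single basic hypergeometric term possessing a known product evaluation. After the substitution $(a,b,c)\mapsto(A,B,C)$ in \eqref{t3eq1}, comparing its summand with that of \eqref{t3c4eq1} forces
\[
g(k)=\frac{(-c,\,q\sqrt{-c},\,-q\sqrt{-c},\,a,\,q/a,\,c,\,-d,\,-q/d\,;\,q)_k}{(\sqrt{-c},\,-\sqrt{-c},\,-cq/a,\,-ac,\,-q,\,cq/d,\,cd,\,q\,;\,q)_k}\,c^{k},
\]
the extra factors $(C/B;q)_{m-n}$, $(A;q)_{m-n}$ and the power $(C/AB)^n$ in \eqref{t3c4eq1} being exactly those Theorem~\ref{t3} produces outside of $g$. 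With this choice the left side of \eqref{t3c4eq1} equals
\[
\frac{(C/A,\,C/B;q)_\infty}{(C,\,C/AB;q)_\infty}\sum_{k=0}^{\infty}g(k),
\]
so the problem reduces to evaluating $\sum_{k\ge0}g(k)$ as an infinite product.

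Applying $(x;q)_k(-x;q)_k=(x^2;q^2)_k$ to the four pairs $\{\sqrt{-c},-\sqrt{-c}\}$, $\{q\sqrt{-c},-q\sqrt{-c}\}$, $\{c,-c\}$, $\{q,-q\}$ exhibits $g(k)$ as the general term of a very-well-poised ${}_8\phi_7$ series in base $q$ with ``$a$-parameter'' $-c$, five free parameters $a,\ q/a,\ c,\ -d,\ -q/d$, and argument $c$: one checks that the remaining denominator parameters are $\pm\sqrt{-c}$, the obligatory $(q;q)_k$, and $-cq$ divided by each of the five free parameters, and that $c$ is the canonical nonterminating argument $(-c)^2q^2/\bigl(a\cdot(q/a)\cdot c\cdot(-d)\cdot(-q/d)\bigr)$. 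This ${}_8\phi_7$ is not generic: because $a\cdot(q/a)=q$ and $(-d)\cdot(-q/d)=q$, it lies in the class admitting a single-product evaluation, which one obtains via a quadratic transformation reducing it to a series in base $q^2$ that is summable in closed form (ultimately by the $q$-Gauss sum \eqref{qgs} with $q$ replaced by $q^2$). Carrying this out should give
\[
\sum_{k=0}^{\infty}g(k)=\frac{(-c,\,-cq;q)_\infty\,(acd,\,acq/d,\,cdq/a,\,cq^2/ad;q^2)_\infty}{(cd,\,cq/d,\,-ac,\,-cq/a;q)_\infty}.
\]

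Multiplying by the prefactor $(C/A,C/B;q)_\infty/(C,C/AB;q)_\infty$ from Theorem~\ref{t3} reproduces the right-hand side of \eqref{t3c4eq1}. The hypotheses are accounted for as follows: $|q|<1$ and $|C/AB|<1$ are exactly what Theorem~\ref{t3} needs (the inner $q$-Gauss sum in its proof has argument $C/AB$), while $|c|<1$ is the convergence condition for the ${}_8\phi_7$, whose argument is $c$. I expect the one real obstacle to be the middle step: pinning down and invoking the correct nonterminating very-well-poised ${}_8\phi_7$ summation valid under the coincidences $a(q/a)=q$ and $(-d)(-q/d)=q$, and then reconciling the mixture of base-$q$ and base-$q^2$ Pochhammer products it produces with the exact form on the right of \eqref{t3c4eq1}; the rest is the renaming and the elementary combination above.
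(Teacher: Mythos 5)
Your proposal is correct and follows essentially the same route as the paper: substitute exactly this $g(k)$ into Theorem~\ref{t3} (with $a,b,c$ renamed $A,B,C$) and evaluate $\sum_{k\ge 0}g(k)$ as the stated product. The ``obstacle'' you flag in the middle step is not an obstacle at all --- the very-well-poised series you describe, with the coincidences $a\cdot(q/a)=q$ and $(-d)\cdot(-q/d)=q$, is precisely the nonterminating $q$-analogue of Whipple's ${}_3F_2$ sum, a standard citable summation (recorded in the paper as \eqref{wq3f2}), so no quadratic-transformation derivation is needed.
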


\begin{proof}
Replace $a$ with $A$, $b$ with $B$, $c$ with $C$ and set \[
g(i)=\begin{cases}
\frac{(-c,q\sqrt{-c},-q\sqrt{-c},a,q/a,c,-d,-q/d;q)_{i}c^{i}}
{(\sqrt{-c},-\sqrt{-c},-cq/a,-ac,-q,cq/d,cd;q)_{i}},&i\geq 0,\\
0, & \text{otherwise}
\end{cases}\] in \eqref{t3eq1} and  use the $q$-analogue of Whipple's $_3F_2$ sum \eqref{wq3f2}\begin{multline}\label{wq3f2}
\sum_{k=0}^{\infty}\frac{  (
-c, \,q \sqrt{-c},\, -q \sqrt{-c}, \,a,\, q/a,\, c,\,-d, \,
-q/d;q)_kc^k}{
(\sqrt{-c},\,-\sqrt{-c},\, -c q/a,\,-a c,\,-q,\,c q/d,\, c d,q;q)_k}
 \\ = \frac{(-c, -cq;q)_{\infty} (a c d, a c q/d, c d
q/a,c q^2/a d;q^2)_{\infty}}{( cd, c q/d, -a c, -cq/a;q)_{\infty}}.
\end{multline}
to sum the right side.
\end{proof}

\section{A Bailey-type Transform}\label{sbtf}

Theorem \ref{t3} above may be recast as a  transformation involving restricted WP-Bailey pairs. As will be seen below,
one reason for doing this is that the resulting transformation appears to hint at an (as of now) undiscovered quite general WP-Bailey chain.
For comparison purposes (the reason to be outlined below), we recall Andrews' \cite{A01} definition of a \emph{WP-Bailey
pair},  namely a pair of sequences $(\alpha_{n}(a,k,q)$,
$\beta_{n}(a,k,q))$  satisfying $\alpha_{0}(a,k,q)$
$=\beta_{0}(a,k,q)$ and {\allowdisplaybreaks
\begin{align}\label{WPpair}
\beta_{n}(a,k,q) &= \sum_{j=0}^{n}
\frac{(k/a;q)_{n-j}(k;q)_{n+j}}{(q;q)_{n-j}(aq;q)_{n+j}}\alpha_{j}(a,k,q).
\end{align}
}

A limiting case of Andrews' first WP-Bailey chain gives that
 if $(\alpha_n, \beta_n)$ satisfy \eqref{WPpair}, then
subject to suitable convergence conditions, {\allowdisplaybreaks
\begin{multline}\label{wpeq}
\sum_{n=0}^{\infty} \frac{(q\sqrt{k},-q\sqrt{k}, y,z;q)_{n}}
{(\sqrt{k},-\sqrt{k}, q k/y,q k/z;q)_{n}}\left( \frac{q a}{y z }\right )^{n} \beta_n =\\
\frac{(q k,q k/yz,q a/y,q a/z;q)_{\infty}} {(q k/y,q k/z,q a,q
a/yz;q)_{\infty}} \sum_{n=0}^{\infty}\frac{(y,z;q)_{n}}{(q a/y ,q
a/z;q)_n}\left (\frac{q a}{y z}\right)^{n}\alpha_n.
\end{multline}
}

We now prove the  Bailey-type transformation alluded to in the title of this section.

\begin{theorem}\label{bt2}
If
\begin{equation}\label{albet2}
\beta_m=\sum_{n=0}^m\frac{(b;q)_{m-n}}{(q;q)_{m-n}}\alpha_n,
\end{equation}
then
\begin{equation}\label{btrans2}
\sum_{m=0}^{\infty}\frac{(a;q)_m}{(c;q)_m}\left(\frac{c}{ab} \right)^m\beta_m
=\frac{(c/a,c/b;q)_{\infty}}{(c,c/ab;q)_{\infty}}\sum_{k=0}^{\infty}\frac{(a;q)_k}{(c/b;q)_k}
\left(\frac{c}{ab} \right)^k\alpha_k.
\end{equation}
\end{theorem}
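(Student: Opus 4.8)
The plan is to substitute the defining relation \eqref{albet2} for $\beta_m$ into the left-hand side of \eqref{btrans2} and interchange the order of summation, reducing the claim to Theorem~\ref{t3} with a suitable choice of $g$. First I would write
\[
\sum_{m=0}^{\infty}\frac{(a;q)_m}{(c;q)_m}\left(\frac{c}{ab}\right)^m\beta_m
=\sum_{m=0}^{\infty}\sum_{n=0}^{m}\frac{(a;q)_m}{(c;q)_m}\left(\frac{c}{ab}\right)^m
\frac{(b;q)_{m-n}}{(q;q)_{m-n}}\alpha_n,
\]
and then observe that this is precisely the double sum appearing on the left of \eqref{t3eq1} in Theorem~\ref{t3}, once one makes the identification $g(k)=\dfrac{(a;q)_k}{(c/b;q)_k}\left(\dfrac{c}{ab}\right)^k\alpha_k$. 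Indeed, comparing general terms, the factor $(c/b;q)_{m_{j+1}}/(a;q)_{m_{j+1}}$ in \eqref{t3eq1} (with $n=m_{j+1}$, $m-n=k$) together with the power $(c/(ab))^n$ exactly matches the reciprocal of the $(c/b;q)$-Pochhammer and the $(c/(ab))$-power hidden in $g(n)$, leaving behind $(a;q)_m$, $(c;q)_m^{-1}$, $(b;q)_{m-n}$, $(q;q)_{m-n}^{-1}$ and the power $(c/(ab))^{m-n}$, which is the summand above. Applying Theorem~\ref{t3} then gives
\[
\sum_{m=0}^{\infty}\frac{(a;q)_m}{(c;q)_m}\left(\frac{c}{ab}\right)^m\beta_m
=\frac{(c/a,c/b;q)_{\infty}}{(c,c/ab;q)_{\infty}}\sum_{k=0}^{\infty}g(k)
=\frac{(c/a,c/b;q)_{\infty}}{(c,c/ab;q)_{\infty}}\sum_{k=0}^{\infty}\frac{(a;q)_k}{(c/b;q)_k}\left(\frac{c}{ab}\right)^k\alpha_k,
\]
which is \eqref{btrans2}.

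The only real point requiring care is the bookkeeping in matching the summand of the iterated double sum to the summand of \eqref{t3eq1}: one should verify that the index substitution $m=n+k$ carries the displayed double sum to exactly $\sum_{m\ge n\ge 0}\frac{(a;q)_m(b;q)_n(c/b;q)_{m-n}}{(c;q)_m(q;q)_n(a;q)_{m-n}}\left(\frac{c}{ab}\right)^n g(m-n)$ after pulling the factors $(b;q)_n/(q;q)_n$ out of $g$ (here the roles of the summation letters are swapped relative to the displayed form of \eqref{t3eq1}, so it is cleanest to relabel and check term-by-term). The remaining subtlety is the interchange of the order of summation, which is justified by the convergence hypothesis on the series in \eqref{btrans2} — and I would simply invoke that hypothesis, as the paper does for its other transformations, since absolute convergence of the double series is what makes Fubini applicable. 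No genuinely hard step is anticipated; the result is essentially a restatement of Theorem~\ref{t3} in the language of the pair $(\alpha_n,\beta_n)$ defined by \eqref{albet2}, and the "proof" is the verification that the two summands coincide.
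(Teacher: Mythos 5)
Your proposal is correct and is essentially the paper's own proof: both reduce \eqref{btrans2} to Theorem~\ref{t3} with the choice $g(k)=\frac{(a;q)_k}{(c/b;q)_k}\left(\frac{c}{ab}\right)^k\alpha_k$, the cancellation of the $(c/b;q)$- and $(a;q)$-Pochhammers and the recombination of the powers of $c/(ab)$ being the whole content. The only difference is direction of presentation — the paper starts from \eqref{t3eq1} with $g=\alpha$, reverses the inner sum, and then rescales $\alpha_k$, whereas you expand $\beta_m$ on the left of \eqref{btrans2} and match term-by-term — and your handling of the index reversal and the Fubini step is consistent with the paper's.
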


\begin{proof}
Replace $g(i)$ with $\alpha_i$ in Theorem \ref{t3}, so that
{\allowdisplaybreaks
\begin{align}\label{bt2eq2}
\frac{(c/a,c/b;q)_{\infty}}{(c,c/ab;q)_{\infty}}
\sum_{k=0}^{\infty}&\alpha_k=\sum_{m\geq n\geq 0}\frac{(a;q)_m(b;q)_n(c/b;q)_{m-n}}{(c;q)_m(q;q)_n(a;q)_{m-n}}\left(\frac{c}{ab}\right)^{n}\alpha_{m-n}\\
&=\sum_{m=0}^{\infty} \frac{(a;q)_m}{(c;q)_m} \sum_{n=0}^m\frac{(b;q)_n(c/b;q)_{m-n}}{(q;q)_n(a;q)_{m-n}}\left(\frac{c}{ab}\right)^{n}\alpha_{m-n}\notag\\
&=\sum_{m=0}^{\infty} \frac{(a;q)_m}{(c;q)_m} \left(\frac{c}{ab}\right)^{m} \sum_{n=0}^m\frac{(b;q)_{m-n}(c/b;q)_{n}}{(q;q)_{m-n}(a;q)_{n}}\left(\frac{c}{ab}\right)^{-n}\alpha_{n}.\notag
\end{align}}
Now make the replacement
\[
\alpha_k \to \frac{(a;q)_k}{(c/b;q)_k}
\left(\frac{c}{ab} \right)^k\alpha_k
\]
and the result follows.
\end{proof}
Remarks: 1) It is clear that replacing $k$ with $ak$, letting $a\to 0$ and then setting $k=b$ in \eqref{WPpair} gives a pair defined by  \eqref{albet2}. However, it does not appear that \eqref{btrans2} follows upon making the same substitutions in any of the existing WP-Bailey chains. Indeed, the only such chain containing free parameters different from $a$ and $k$ (the transformation \eqref{btrans2} has three free parameters $a$, $b$ and $c$) is Andrews first WP-Bailey chain, and it is not difficult to see that replacing $k$ with $ak$, letting $a\to 0$ and then setting $k=b$ in this chain results in a trivial identity. It may be that \eqref{btrans2}  follows from some as yet undiscovered WP-Bailey chain.

2) If Theorem \ref{t2} is recast as a Bailey-type transform, the result is merely in a special case of Theorem \ref{bt2}.

As remarked above, it may be that the transformation at \eqref{btrans2} above may be a restricted version of a a full (as yet unknown) WP-Bailey chain, so possibly its main interest at present is possibly as an indicator of this chain. As it stands  (one might say it is only a ``shadow'' of the full WP-Bailey chain that it possibly hints at), the identities resulting from substituting pairs deriving from existing WP-Bailey pairs for the most part lead to known identities.

\subsection{Two companions to an identity of Andrews}

One implication we believe to be new is a pair of companion identities to a result \cite[Theorem 7]{A66} of Andrews.
\begin{corollary}\label{corbaprs}
If $|q|,|c/aq|<1$, then
\begin{multline}\label{btrans2eq2}
\sum_{m=0}^{\infty}\frac{(a,1/b;q^2)_m}{(c,q^2;q^2)_m}\left(\frac{c}{aq} \right)^m\\
=\frac{(c/a,c/b;q^2)_{\infty}}{(c,c/ab;q^2)_{\infty}}\sum_{k=0}^{\infty}
\frac{(a;q^2)_k(1/b;q)_k}{(c/b;q^2)_k(q;q)_k}
\left(\frac{c}{aq} \right)^k;
\end{multline}
\begin{multline}\label{btrans2eq22}
\sum_{m=0}^{\infty}\frac{(a,q^2/b;q^2)_m}{(c,q^2;q^2)_m}\left(\frac{c}{aq} \right)^m\\
=\frac{(c/a,c/b;q^2)_{\infty}}{(c,c/ab;q^2)_{\infty}}\sum_{k=0}^{\infty}
\frac{(a;q^2)_k(q/b;q)_k}{(c/b;q^2)_k(q;q)_k}
\left(\frac{c}{aq} \right)^k.
\end{multline}
\end{corollary}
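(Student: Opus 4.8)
The plan is to derive both identities from Theorem \ref{bt2} by exhibiting, for each, an explicit pair $(\alpha_n,\beta_n)$ satisfying the defining relation \eqref{albet2} and then substituting it into the transformation \eqref{btrans2}. The base transformation \eqref{btrans2} requires a relation of the shape $\beta_m=\sum_{n=0}^m \frac{(b;q)_{m-n}}{(q;q)_{m-n}}\alpha_n$; the whole point of the construction is to choose $\alpha_n$ so that this convolution telescopes into something recognizable. For \eqref{btrans2eq2}, the natural choice is to take $\alpha_n$ to be (essentially) a delta-like or geometric-type sequence in the variable $q$ whose $q$-binomial convolution with $(b;q)_{m-n}/(q;q)_{m-n}$ collapses via the $q$-binomial theorem; concretely I would try $\alpha_n=\dfrac{(q^{-1}/b\cdot\text{something})}{\cdots}$ — more precisely, one looks for $\alpha_n$ so that $\beta_m=\dfrac{(1/b;q)_m}{(q;q)_m}$ (up to a power of $q$), since $(1/b;q^2)_m/(q^2;q^2)_m$ is what must appear after $q\to q^2$ on the $\beta$-side. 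The clean way to see this is to use the Vandermonde-type evaluation $\sum_{n=0}^m\frac{(b;q)_{m-n}}{(q;q)_{m-n}}\,x^n=\frac{(bx;q)_m}{(q;q)_m}\big/\!\cdots$ only in the degenerate case, i.e. to pick $\alpha_n$ supported so the sum is trivial, which forces $\alpha_n=\frac{(1/b;q)_n}{(q;q)_n}\cdot(\text{correction})$; I would solve the recursion $\beta_m - $ (shift of $\beta$) to pin down $\alpha_n$ exactly.

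Once the pair is in hand, the second step is bookkeeping: substitute into \eqref{btrans2} with $a\mapsto a$, $b\mapsto b$, $c\mapsto c$ replaced by their $q^2$-analogues, i.e. run the whole of Theorem \ref{bt2} with base $q^2$ rather than $q$. The factor $(c/aq)^m$ arises because $c/(ab)$ with the substitution $b\mapsto 1/b$ becomes $c\cdot b/a$, and after re-scaling $b$ by a power of $q$ (this is where the lone $q$ in $c/aq$ enters) one gets exactly $(c/aq)^m$. On the $\alpha$-side of \eqref{btrans2}, the prefactor $(a;q)_k/(c/b;q)_k\,(c/ab)^k$ becomes $(a;q^2)_k/(c/b;q^2)_k\,(c/aq)^k$, and the residual $\alpha_k$ contributes the $(1/b;q)_k/(q;q)_k$ (respectively $(q/b;q)_k/(q;q)_k$) factor — note these last Pochhammers are in base $q$, not $q^2$, which is the tell-tale sign that $\alpha_n$ itself mixed the two bases. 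The identity \eqref{btrans2eq22} is obtained in exactly the same way but starting from the companion pair with $b$ replaced by $q^2/b$ (equivalently, shifting the support of $\alpha_n$ by one unit), which is why $(1/b;q)_k$ is replaced throughout by $(q/b;q)_k$.

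The main obstacle I anticipate is getting the two bases ($q$ versus $q^2$) and the exact power of $q$ in the scaling of $b$ to line up so that the convolution \eqref{albet2} is satisfied \emph{on the nose}; this is a finite but fiddly verification, best done by checking that both sides of \eqref{albet2} satisfy the same first-order $q$-recurrence in $m$ and agree at $m=0$ (where both reduce to $\alpha_0=\beta_0$, the compatibility condition built into Theorem \ref{bt2}). A secondary point is convergence: one must check $|c/aq|<1$ (and $|q|<1$) suffices for both series in \eqref{btrans2} to converge after the substitution, which follows since the summand ratios are $O(q^{2k})\cdot(c/aq)^k$ and $O(q^k)\cdot(c/aq)^k$ respectively. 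Finally, to connect with \cite[Theorem 7]{A66}, I would note that a particular specialization of the parameters $a,b,c$ (or a limit such as $b\to\infty$) in either \eqref{btrans2eq2} or \eqref{btrans2eq22} recovers Andrews' identity, so that the two displayed results genuinely deserve to be called companions; spelling out that specialization is routine and I would relegate it to a remark rather than the proof proper.
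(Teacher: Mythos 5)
Your overall skeleton (find a pair $(\alpha_n,\beta_n)$ satisfying \eqref{albet2}, feed it into \eqref{btrans2}, and run the whole theorem in base $q^2$) is the right one, and your observation that the mixed bases $q$ versus $q^2$ in the final answer signal that $\alpha_n$ itself must mix bases is a correct diagnosis. But the proposal never actually produces the pair, and the mechanism you suggest for finding/verifying it would not work. The pair needed for \eqref{btrans2eq2} is, in the base-$q^2$ normalization,
\[
\alpha_n=\frac{(1/b;q)_n}{(q;q)_n}\Bigl(\frac{b}{q}\Bigr)^n,\qquad
\beta_n=\frac{(1/b;q^2)_n}{(q^2;q^2)_n}\Bigl(\frac{b}{q}\Bigr)^n,
\]
and the defining relation $\beta_m=\sum_{n=0}^m\frac{(b;q^2)_{m-n}}{(q^2;q^2)_{m-n}}\alpha_n$ is a genuine quadratic-base convolution identity. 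It does not ``collapse via the $q$-binomial theorem,'' $\alpha_n$ is not delta-like or trivially supported, and the left side (a convolution) does not visibly satisfy a first-order recurrence in $m$ that you could match against the right side --- establishing such a recurrence is essentially equivalent to proving the identity. So the one step that carries all the content of the corollary is the step you defer as ``fiddly verification.''

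The paper closes exactly this gap by not verifying \eqref{albet2} directly at all: it takes Bressoud's two WP-Bailey pairs \eqref{Bpr3} and \eqref{Bpr2} (which are already known to satisfy \eqref{WPpair}), applies the degeneration $k\mapsto ak$, $a\to 0$, $k=b$ --- under which \eqref{WPpair} becomes \eqref{albet2} --- and reads off the two mixed-base pairs, one yielding \eqref{btrans2eq2} and the other \eqref{btrans2eq22} after the substitution $\sqrt{q}\mapsto q$. If you want to avoid citing Bressoud, you would need to prove the convolution identity above directly (e.g., by reducing it to a $q$-Chu--Vandermonde or quadratic summation), which is a concrete missing lemma in your write-up. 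Your convergence remark and the comparison with Andrews' identity are fine but peripheral.
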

\begin{proof}
Start with the WP-Bailey pair of Bressoud \cite{B81}
\begin{align}\label{Bpr3}
\alpha_n(a,k)&=\frac{1-a\,q^{2n}}{1-a}\,
\frac{\left(\sqrt{a},\frac{a}{k};\sqrt{q}\right)_n}
{\left(\sqrt{q},k\sqrt{\frac{q}{ a}};\sqrt{q}\right)_n}
\left(\frac{k}{a \sqrt{q}} \right)^n,\\
\beta_n(a,k)&=\frac{\left(k,\frac{a}{k},-k\sqrt{\frac{q}{a}},-\frac{k
q}{\sqrt{a}};q\right)_n}{\left(q, \frac{q k^2}{a},-\sqrt{a},-\sqrt{a
q};q\right)_n} \left(\frac{k}{a \sqrt{q}} \right)^n,\notag
\end{align}
and making the same substitutions listed above (replacing $k$ with $ak$, letting $a\to 0$ and then setting $k=b$) leads to the  pair
\begin{align}
\alpha_n&=\frac{(1/b;\sqrt{q})_n}{(\sqrt{q};\sqrt{q})_n}\left(\frac{b}{\sqrt{q}}\right)^n,\\
\beta_n&=\frac{(1/b;q)_n}{(q;q)_n}\left(\frac{b}{\sqrt{q}}\right)^n. \notag
\end{align}
Substitution of this latter pair into \eqref{btrans2}, and then replacing $\sqrt{q}$ with $q$ leads to the identity at \eqref{btrans2eq2} above. Applying the same treatment to a second WP-Bailey pair due to Bressoud \cite{B81}
\begin{align}\label{Bpr2}
\alpha_n(a,k)&=\frac{1-\sqrt{a}\,q^n}{1-\sqrt{a}}\,
\frac{\left(\sqrt{a},\frac{a \sqrt{q}}{k};\sqrt{q}\right)_n}
{\left(\sqrt{q},\frac{k}{ \sqrt{a}};\sqrt{q}\right)_n}
\left(\frac{k}{a \sqrt{q}} \right)^n,\\
\beta_n(a,k)&=\frac{\left(k,\frac{a q}{k};q\right)_n}{\left(q,
\frac{k^2}{a};q\right)_n}
\frac{\left(\frac{-k}{\sqrt{a}};\sqrt{q}\right)_{2n}}{\left( -
\sqrt{a q};\sqrt{q}\right)_{2n}}\left(\frac{k}{a \sqrt{q}}
\right)^n, \notag
\end{align}
gives \eqref{btrans2eq22} above.
\end{proof}

This identity at \eqref{btrans2eq2} above is easily seen to be equivalent to the identity
\[
\sum_{m=0}^{\infty}\frac{(a,b;q^2)_m}{(bt,q^2;q^2)_m}\left(\frac{t}{q} \right)^m\\
=\frac{(abt,t;q^2)_{\infty}}{(bt,at;q^2)_{\infty}}\sum_{k=0}^{\infty}
\frac{(b;q^2)_k(a;q)_k}{(abt;q^2)_k(q;q)_k}
\left(\frac{t}{q} \right)^k,
\]
while that at \eqref{btrans2eq22} is equivalent to the identity
\[
\sum_{m=0}^{\infty}\frac{(a,b;q^2)_m}{(bt,q^2;q^2)_m}\left(\frac{t}{q} \right)^m\\
=\frac{(abt/q^2,t;q^2)_{\infty}}{(bt,at/q^2;q^2)_{\infty}}
\sum_{k=0}^{\infty}
\frac{(b;q^2)_k(a/q;q)_k}{(abt/q^2;q^2)_k(q;q)_k}
\left(\frac{t}{q} \right)^k.
 \]
Both of these may be viewed as  companions to the afore-mentioned identity \cite[Theorem 7]{A66} of Andrews:
\[
\sum_{m=0}^{\infty}\frac{(a,b;q^2)_m}{(bt,q^2;q^2)_m}\left(tq\right)^m\\
=\frac{(abt,t;q^2)_{\infty}}{(bt,at;q^2)_{\infty}}\sum_{k=0}^{\infty}
\frac{(b;q^2)_k(a;q)_k}{(abt;q^2)_k(q;q)_k}
t^k.
\]
Remark: An identity equivalent to that of Andrews above may be derived  by treating the WP-Bailey pair
\begin{align}\label{cn333pr}
\alpha_n(a,k,q)&=\frac{1-\sqrt{a}q^n}{1-\sqrt{a}}\frac{\left(\sqrt{a},\frac{a}{k};\sqrt{q}
\right)_n}{\left(\sqrt{q},k\sqrt{\frac{q}{a}};\sqrt{q}
\right)_n}\left( \frac{k}{a}\right)^n,
\\
\beta_n(a,k,q)&=\frac{\left(
-\frac{k}{\sqrt{a}};\sqrt{q}\right)_{2n}}{\left( -\sqrt{a
q};\sqrt{q}\right)_{2n}}\frac{\left(
\frac{a}{k},k;q\right)_{n}}{\left( \frac{k^2
q}{a},q;q\right)_{n}}\left( \frac{k \sqrt{q}}{a}\right)^n, \notag
\end{align}
from \cite{MZ10} in the same manner as were the pairs of Bressoud in Corollary \ref{corbaprs} above.

\subsection{Identities involving orthogonal polynomials}

Another  (possibly new)  application of this transform is a transformation formula  for a series involving the continuous $q$-ultraspherical polynomials. These polynomials (see for example, \cite[page 527]{AAR99}) may be defined by
\begin{equation}
C_n(\cos \theta;\beta |q)=\sum_{k=0}^n\frac{(\beta;q)_k(\beta;q)_{n-k}}{(q;q)_k(q;q)_{n-k}}e^{i(n-2k)\theta}
\end{equation}

\begin{proofof}{Corollary \ref{qultra}}
Upon noting that
\[
C_n(\cos \theta;\beta |q)
=e^{-i n \theta}\sum_{k=0}^n\frac{(\beta;q)_k(\beta;q)_{n-k}}{(q;q)_k(q;q)_{n-k}}e^{2 i k \theta},
\]
replace $b$ with $\beta$ in Theorem \ref{bt2}, set
\[
\alpha_n = \frac{(\beta;q)_n}{(q;q)_n}e^{2in\theta},
\]
so that $\beta_n = e^{i n \theta}C_n(\cos (\theta);\beta |q)$, and \eqref{usp1} follows directly from \eqref{btrans2}, after substituting for $\alpha_n$ and $\beta_n$.
\end{proofof}

Another implication is a summation formula for a series involving the Al-Salam-Chihara polynomials, which may be defined  (see \cite[Page 381, Equation (15.1.12)]{I09}) as follows:
\begin{equation}\label{ascpolys}
p_n(\cos \theta;t_1,t_2 | q)=\frac{(q;q)_nt_1^n}{(t_1t_2;q)_n}
\sum_{k=0}^n\frac{(t_2e^{i\theta};q)_k(t_1e^{-i\theta};q)_{n-k}}
{(q;q)_k(q;q)_{n-k}}e^{i(n-2k)\theta}.
\end{equation}

\begin{corollary}
Let $p_n(\cos \theta;t_1,t_2 | q)$ be as at \eqref{ascpolys}, and suppose $|t_2 e^{-i\theta}/q|$, $|t_2 e^{i\theta}/q|$, $|q|<1$.
Then
\begin{equation}\label{pneq}
\sum_{n=0}^{\infty}p_n(\cos \theta;t_1,t_2 | q)\left( \frac{t_2}{t_1 q}\right)^n=
\frac{1-t_1t_2/q}{1-2t_2\cos(\theta)/q+t_2^2/q^2}.
\end{equation}
\end{corollary}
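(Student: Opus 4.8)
The plan is to derive \eqref{pneq} from Theorem \ref{bt2}, in close analogy with the proof of Corollary \ref{qultra}. First I would rewrite the defining relation \eqref{ascpolys} by splitting $e^{i(n-2k)\theta}=e^{in\theta}e^{-2ik\theta}$, so that
\[
e^{-in\theta}\,\frac{(t_1t_2;q)_n}{(q;q)_n t_1^n}\,p_n(\cos\theta;t_1,t_2\mid q)
=\sum_{k=0}^n\frac{(t_1e^{-i\theta};q)_{n-k}}{(q;q)_{n-k}}\cdot
\frac{(t_2e^{i\theta};q)_k}{(q;q)_k}\,e^{-2ik\theta}.
\]
This exhibits the left-hand side as a $\beta_n$ in the sense of \eqref{albet2}, with the choices $b=t_1e^{-i\theta}$ and $\alpha_k=\dfrac{(t_2e^{i\theta};q)_k}{(q;q)_k}e^{-2ik\theta}$. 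The one genuine choice to be made here is which of the two $\theta$-dependent Pochhammer factors in \eqref{ascpolys} is assigned the role of $(b;q)_{n-k}$; taking $b=t_1e^{-i\theta}$ (rather than $t_2e^{i\theta}$) is what makes the resulting single-sum side collapse cleanly below.

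Next I would feed this pair into \eqref{btrans2} with the specialization $a=q$ and $c=t_1t_2$. With these choices $\dfrac{(a;q)_m}{(c;q)_m}$ exactly cancels the factor $\dfrac{(t_1t_2;q)_m}{(q;q)_m}$ appearing in $\beta_m$, while the power $\left(\dfrac{c}{ab}\right)^m=\left(\dfrac{t_2e^{i\theta}}{q}\right)^m$ combines with $e^{-im\theta}t_1^{-m}$ to give exactly $\left(\dfrac{t_2}{t_1q}\right)^m$. Hence the left side of \eqref{btrans2} becomes precisely the left side of \eqref{pneq}. On the right side of \eqref{btrans2} one has $c/a=t_1t_2/q$, $c/b=t_2e^{i\theta}$, and $c/ab=t_2e^{i\theta}/q$, and the factor $\dfrac{(a;q)_k}{(c/b;q)_k}\alpha_k$ simplifies to $e^{-2ik\theta}$; the remaining sum is the geometric series $\sum_{k\ge0}\left(t_2e^{-i\theta}/q\right)^k=\bigl(1-t_2e^{-i\theta}/q\bigr)^{-1}$, which converges under the stated hypotheses.

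It then remains only to simplify the prefactor
\[
\frac{(t_1t_2/q,\,t_2e^{i\theta};q)_\infty}{(t_1t_2,\,t_2e^{i\theta}/q;q)_\infty}\cdot\frac{1}{1-t_2e^{-i\theta}/q}.
\]
Using $(x/q;q)_\infty=(1-x/q)(x;q)_\infty$ twice, with $x=t_1t_2$ and then with $x=t_2e^{i\theta}$, collapses the infinite products to $(1-t_1t_2/q)$ and $(1-t_2e^{i\theta}/q)^{-1}$ respectively, leaving
\[
\frac{1-t_1t_2/q}{(1-t_2e^{i\theta}/q)(1-t_2e^{-i\theta}/q)}
=\frac{1-t_1t_2/q}{1-2t_2\cos\theta/q+t_2^2/q^2},
\]
which is the right side of \eqref{pneq}. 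There is no serious obstacle in this argument beyond correctly matching parameters; the only point requiring care is the asymmetric role of $e^{i\theta}$ and $e^{-i\theta}$, both in selecting $b$ and in verifying that the geometric series surviving on the single-sum side is the one governed by $|t_2e^{-i\theta}/q|<1$, while the companion bound $|t_2e^{i\theta}/q|<1$ is what is needed for the infinite-product manipulations and for convergence of the double sum implicit in Theorem \ref{bt2}.
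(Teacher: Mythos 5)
Your proposal is correct and uses exactly the same substitutions as the paper's proof ($a=q$, $b=t_1e^{-i\theta}$, $c=t_1t_2$, $\alpha_k=(t_2e^{i\theta};q)_k e^{-2ik\theta}/(q;q)_k$ in Theorem \ref{bt2}); the paper simply states that ``after some simplification'' the right side of \eqref{btrans2} becomes the right side of \eqref{pneq}, whereas you carry out that simplification explicitly. Your parameter matching, the collapse of the single-sum side to a geometric series, and the telescoping of the infinite products all check out.
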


\begin{proof}
In Theorem \ref{bt2}, set $a=q$, $b=t_1 e^{-i\theta}$, $c=t_1t_2$ and
\[
\alpha_k = \frac{(t_2 e^{i\theta};q)_k}{(q;q)_k}e^{-2i\theta k}
\]
With these substitutions, the left side of \eqref{btrans2} becomes the left side of \eqref{pneq}, and after some simplification, the right side of \eqref{btrans2} becomes the right side of \eqref{pneq}, giving the result.
\end{proof}

\section{Concluding Remarks}

A number of questions may be asked.

1) The transformations in the present paper derive ultimately from the $q$-Gauss sum, and those of Andrews \cite{A77} and Chu \cite{Ch02,Ch05} derive ultimately from the $q$-Pfaff-Saalsch\"utz sum. Are there similar multi-sum-to-single-sum transformations that derive from other known summation formulae?

2) The transformation in Theorem \ref{bt2} may be re-cast as follows:
if
\begin{equation*}
\beta_m=\sum_{n=0}^m\frac{(k;q)_{m-n}}{(q;q)_{m-n}}\alpha_n,
\end{equation*}
then
\begin{equation*}
\sum_{m=0}^{\infty}\frac{(d;q)_m}{(c;q)_m}\left(\frac{c}{dk} \right)^m\beta_m
=\frac{(c/d,c/k;q)_{\infty}}{(c,c/dk;q)_{\infty}}\sum_{n=0}^{\infty}\frac{(d;q)_n}{(c/k;q)_n}
\left(\frac{c}{dk} \right)^n\alpha_n.
\end{equation*}
Does this transformation  derive from some as yet undiscovered WP-Bailey chain, after replacing $k$ with $ka$ and letting $a\to 0$?

3) Are there combinatorial proofs of the identities at \eqref{mseq10}, \eqref{mseq102} and \eqref{mseq103} above?

\begin{acknowledgements}
This work was partially supported by a grant from the Simons Foundation (\#209175 to James Mc Laughlin).
\end{acknowledgements}


\begin{thebibliography}{99}

\bibitem{AC76}
Al-Salam, W.A.; Chihara, T.S.
\emph{Convolutions of orthogonal polynomials}, SIAM J. Math. Anal. \textbf{7} (1976), 16--28.


\bibitem{A65}
Andrews, G. E. \emph{A simple proof of Jacobi's triple product identity.} Proc. Amer. Math. Soc. \textbf{16} (1965) 333–-334.

\bibitem{A66}
Andrews, G. E.
\emph{On basic hypergeometric series, mock theta functions, and partitions, II.} Quart. J. Math. \textbf{17} (1966) 132--143 .

\bibitem{A74}
Andrews, G. E.
\emph{An analytic generalization of the Rogers-Ramanujan identities for odd moduli.}
Proc. Nat. Acad. Sci. U.S.A. \textbf{71} (1974), 4082–-4085.

\bibitem{A75}
Andrews, G. E.
 \emph{On Rogers-Ramanujan type identities related to the modulus 11.}
Proc. London Math. Soc. (3) \textbf{30} (1975), 330–-346.

\bibitem{A77}
Andrews, G. E.
\emph{Partitions, q-series and the Lusztig-Macdonald-Wall conjectures.}
Invent. Math. \textbf{41} (1977), no. 1, 91–-102.

\bibitem{A79}
Andrews, G. E.
\emph{Connection coefficient problems and partitions}.
Relations between combinatorics and other parts of mathematics
(Proc. Sympos. Pure Math., Ohio State Univ., Columbus, Ohio, 1978), pp. 1--24,
Proc. Sympos. Pure Math., XXXIV, Amer. Math. Soc., Providence, R.I., 1979.

\bibitem{A81}
Andrews, G. E.
\emph{Multiple q-series identities.}
 Houston J. Math. \textbf{7} (1981), no. 1, 11--22.

\bibitem{A84}
Andrews, G. E. \emph{Multiple series Rogers-Ramanujan type identities}. Pacific Journal of Mathematics \textbf{114} (1984), no. 2, 267--283.

\bibitem{A01}
Andrews, G. E. \emph{Bailey's transform, lemma, chains and
tree.} Special functions 2000: current perspective and future
directions (Tempe, AZ), 1--22, NATO Sci. Ser. II Math. Phys. Chem.,
\textbf{30}, Kluwer Acad. Publ., Dordrecht, 2001.

\bibitem{AAR99}
Andrews, G. E.; Askey, R.; Roy, R.
\emph{Special functions.}
Encyclopedia of Mathematics and its Applications, \textbf{71}. Cambridge University Press, Cambridge, 1999. xvi+664 pp.










\bibitem{B49}
Bailey, W. N., \emph{Identities of the Rogers-Ramanujan type.} Proc.
London Math. Soc., \textbf{50} (1949) 1--10.




\bibitem{B81}
Bressoud, D.
\emph{The Rogers--Ramanujan identities. (solution to problem
74-12)}. SIAM Review. \textbf{23} (1981) 101--104.




\bibitem{C05}
Chapman R.  \emph{A probabilistic proof of the Andrews-Gordon identities}, Discrete Mathematics, volume \textbf{290} (2005), no. 1, pages 79--84.

\bibitem{Ch02}
Chu, W.
\emph{The Saalsch\"utz chain reactions and bilateral basic hypergeometric series.}
 Constr. Approx. \textbf{18} (2002), no. 4, 579--597.

 \bibitem{Ch05}
 Chu, W.
\emph{The Saalsch\"utz chain reactions and multiple q-series transformations.}
 Theory and applications of special functions, 99--121, Dev. Math., \textbf{13}, Springer, New York, 2005.



\bibitem{GR04}
Gasper, G.; Rahman, M. \emph{Basic hypergeometric series}.
With a foreword by Richard Askey. Second edition. Encyclopedia of
Mathematics and its Applications, 96. Cambridge University Press,
Cambridge, 2004. xxvi+428 pp.

\bibitem{G87}
 Gustafson R.A.
 \emph{Multilateral summation theorems for ordinary and basic
hypergeometric series in $U(n)$}, SIAM J. Math. Anal. \textbf{18} (1987), 1576--1596.

\bibitem{I09}
Ismail, M. E. H.
\emph{Classical and quantum orthogonal polynomials in one variable.}  Encyclopedia of Mathematics and its Applications, \textbf{98}. Cambridge University Press, Cambridge, 2005. xviii+706 pp.




\bibitem{LM09}
Q. Liu; X. Ma \emph{On the Characteristic Equation of Well-Poised
Bailey Chains.} Ramanujan J. \textbf{18} (2009), no. 3, 351–370.











\bibitem{MZ10}
Mc Laughlin, J.; Zimmer, P. \emph{General WP-Bailey Chains.} Ramanujan J. 22 (2010), no. 1, 11–31.



\bibitem{MS02}
Milne, S. C.; Schlosser, M.
\emph{A new An extension of Ramanujan's $_1\psi_1$ summation with applications to multilateral $A_n$ series}.
Conference on Special Functions (Tempe, AZ, 2000).
Rocky Mountain J. Math. \textbf{32} (2002), no. 2, 759–-792.

\bibitem{P06}
Pak, I. \emph{Partition bijections, a survey}. Ramanujan J. \textbf{12} (2006), no. 1, 5–-75.

\bibitem{RS03}
Rosengren, H.; Schlosser, M.
\emph{Summations and transformations for multiple basic and elliptic hypergeometric
series by determinant evaluations.}
Indag. Math. (N.S.) \textbf{14} (2003), no. 3-4, 483--513.







\bibitem{S94}
Singh, U. B. \emph{A note on a transformation of Bailey.}
 Quart. J. Math. Oxford Ser. (2) \textbf{45} (1994), no. 177, 111--116.




\bibitem{S52}
Slater, L. J.
 \emph{Further identities of the Rogers-Ramanujan type},
 Proc. London Math.Soc. \textbf{54} (1952) 147--167.


\bibitem{M80}
Milne, S. C.
\emph{A multiple series transformation of the very well poised $_{2k+4}\Psi_{2k+4}$.}
 Pacific J. Math. \textbf{91} (1980), no. 2, 419--430.



\bibitem{SW11}
Spiridonov, V. P.; Warnaar, S. O.
\emph{New multiple 6ψ6 summation formulas and related conjectures.}
 Ramanujan J. \textbf{25} (2011), no. 3, 319–=342.


\bibitem{W03}
Warnaar, S. O. \emph{Extensions of the well-poised and elliptic
well-poised Bailey lemma.} Indag. Math. (N.S.) \textbf{14} (2003),
no. 3-4, 571--588.


\end{thebibliography}


\end{document}